\newtheorem{theorem}{Theorem}
\newtheorem{cor}{Corollary} 
\newtheorem{lem}{Lemma}
\theoremstyle{definition}
\newtheorem*{defi}{Definition} 
\theoremstyle{remark}
\newtheorem*{rem}{Remark}
\newcommand{\problemtitle}[1]{\gdef\@problemtitle{#1}}
\newcommand{\probleminput}[1]{\gdef\@probleminput{#1}}
\newcommand{\problemquestion}[1]{\gdef\@problemquestion{#1}}
  \par\addvspace{.5\baselineskip}
  \par\addvspace{.5\baselineskip}
\newcommand{\ra}{\rightarrow}
\newcommand{\val}{\operatorname{val}}
\newcommand{\nn}{\mathbb{N}}
\newcommand{\rr}{\mathbb{R}}
\newcommand{\udot}{\mathbin{\dot{\cup}}}
\newcommand{\rank}{\operatorname{rk}}
\newcommand{\matroid}{\mathcal{M}}
\newcommand{\basis}{\mathcal{B}}
\newcommand{\inds}{\mathcal{F}}
\title{Matroid Bases with Cardinality Constraints on the Intersection}
    \author[1]{Stefan Lendl\footnote{lendl@math.tugraz.at}}
    \author[2]{Britta Peis\footnote{britta.peis@oms.rwth-aachen.de}}
    \author[2]{Veerle Timmermans\footnote{veerle.timmermans@oms.rwth-aachen.de}}
	\affil[1]{Graz University of Technology, Institute of Discrete Mathematics}
	\affil[2]{RWTH Aachen, Chair of Management Science}	
	\date{}
\begin{document}
\maketitle

\begin{abstract} 
Given two matroids $\matroid_{1} = (E, \basis_{1})$ and $\matroid_{2} = (E,
\basis_{2})$ on a common ground set $E$ with base sets $\basis_1$ and
$\basis_2$, some integer $k \in \nn$, and two cost functions $c_{1}, c_{2}
\colon E \ra \rr$, we consider the optimization problem to find a basis $X \in
\basis_{1}$ and a basis $Y \in \basis_{2}$ minimizing  
cost $\sum_{e\in X} c_1(e)+\sum_{e\in Y} c_2(e)$
 subject to either a lower bound constraint 
$|X \cap Y| \le  k$, an upper bound constraint $|X \cap Y| \ge  k$, or an equality constraint $|X \cap Y| =  k$ on the size of the intersection of the two bases $X$ and $Y$.
The problem with lower bound constraint turns out to be a generalization of the Recoverable Robust Matroid problem under interval uncertainty representation for which the question for a strongly polynomial-time algorithm was left as an open question in \cite{hradovich2017recoverable-MST}.

We show that the two problems with lower and upper bound constraints on the size of the intersection  can be reduced to weighted matroid intersection, and thus be solved with a strongly polynomial-time primal-dual algorithm. The question whether the problem with equality constraint can also be solved efficiently turned out to be a lot harder.
As our main result, we present a strongly polynomial, primal-dual algorithm for the problem with equality constraint on the size of the intersection.

Additionally, we discuss generalizations of the problems from matroids to polymatroids, and from two to three or more matroids.

%
\end{abstract}

\section{Introduction}\label{sec:intro}

Matroids are fundamental and well-studied structures in combinatorial optimization. Recall that a matroid $\mathcal{M}$ is a tuple $\mathcal{M}=(E,\mathcal{F})$, consisting of
 a finite ground set $E$ and a family of subsets $\mathcal{F}\subseteq 2^E$, called the \emph{independent sets},
satisfying (i) $\emptyset \in \mathcal{F}$, (ii) if $F\in \mathcal{F}$ and $F'\subset F$, then $F'\in \mathcal{F}$, and (iii) if $F,F'\in \mathcal{F}$ with $|F'|>|F|$, then there exists some element $e\in F'\setminus{F}$ satisfying
$F\cup \{e\}\in \mathcal{F}$. As usual when dealing with matroids, we assume that a matroid is specified via
an independence oracle, that, given $S\subseteq E$ as input, checks whether or not $S\in \mathcal{F}$. 
Any inclusion-wise maximal set in independence system $\mathcal{F}$ is called a \emph{basis} of $\mathcal{F}$.
Note that the set of bases $\mathcal{B}=\mathcal{B}(\mathcal{M})$ of a matroid $\mathcal{M}$ uniquely defines its independence system via $\mathcal{F}(\mathcal{B})=\{F\subseteq E\mid F\subseteq B \mbox{ for some } B\in \mathcal{B}\}.$ 
Because of their rich structure, matroids allow for various different characterizations (see, e.g., \cite{oxley2006matroid}). In particular, matroids can be characterized algorithmically as the only downward-closed structures for which a simple greedy algorithm is guaranteed to return a basis $B\in \mathcal{B}$ of minimum cost $c(B)=\sum_{e\in B} c(e)$ for \emph{any} linear cost function $c:E\to \mathbb{R}_+.$ Moreover, the problem to find a min-cost common base in two matroids $\mathcal{M}_1=(E,\mathcal{B}_1)$ and $\mathcal{M}_2=(E,\mathcal{B}_2)$
 on the same ground set, or the problem to maximize a linear function over the intersection $\mathcal{F}_1\cap \mathcal{F}_2$ of two matroids  $\mathcal{M}_1=(E,\mathcal{F}_1)$ and $\mathcal{M}_2=(E,\mathcal{F}_2)$ can be done efficiently with a strongly-polynomial primal-dual algorithm (cf.\ \cite{frank1981weighted}). Optimization over the intersection of three matroids, however, is easily seen to be NP-hard. See \cite{3-matroids} for most recent progress on approximation results for the latter problem. 

Optimization on matroids, their generalization to polymatroids,   or on the intersection of two (poly-)matroids, capture a wide range of interesting problems. 
In this paper, we introduce and study
yet another variant of matroid-optimization problems. Our problems can be seen as a variant or generalization of matroid intersection: we aim at minimizing the sum of two linear cost functions over two bases chosen from two matroids on the same ground set  with an additional cardinality constraint on the intersection of the two bases. As it turns out, the problems with lower and upper bound constraint are computationally equivalent to matroid intersection, while the problem with equality constraint seems to be strictly more general, and to lie somehow on the boundary of efficiently solvable combinatorial optimization problems. Interestingly, while the problems on matroids with lower, upper, or equality constraint on the intersection can be shown to be solvable in strongly polynomial time,
the extension of the problems towards polymatroids is solvable in strongly polynomial time for the lower bound constraint, but NP-hard for both, upper and equality constraints.

\paragraph{The model}
Given two matroids $\matroid_{1} = (E, \basis_{1})$ and $\matroid_{2} = (E,
\basis_{2})$ on a common ground set $E$ with base sets $\basis_1$ and
$\basis_2$, some integer $k \in \nn$, and two cost functions $c_{1}, c_{2}
\colon E \ra \rr$, we consider the optimization problem to find a basis $X \in
\basis_{1}$ and a basis $Y \in \basis_{2}$ minimizing  $c_{1}(X) + c_{2}(Y)$ subject to either a lower bound constraint 
$|X \cap Y| \ge  k$, an upper bound constraint $|X \cap Y| \le  k$, or an equality constraint $|X \cap Y| =  k$ on the size of the intersection of the two bases $X$ and $Y$.
Here, as usual, we write $c_1(X)=\sum_{e\in X} c_1(e)$ and $c_2(Y)=\sum_{e\in Y} c_2(e)$ to shorten notation.
Let us denote the following problem by $(P_{= k})$.
 \begin{align*}
    \min\ & c_{1}(X) + c_{2}(Y)\\
    \text{s.t. } 
    & X \in \mathcal{B}_{1}\\
    & Y \in \mathcal{B}_{2}\\
    & |X \cap Y| = k
\end{align*}

Accordingly, if $|X \cap Y| = k$ is replaced by either the upper bound constraint $|X \cap Y| \leq k$, or
the lower bound constraint $|X \cap Y| \geq k$,
 the problem is called $(P_{\le k})$ or $(P_{\ge k})$, respectively. Clearly, it only makes sense to consider integers $k$ in the range between $0$ and $K:=\min\{\rank(\matroid_1), \rank(\matroid_2)\}$, where $\rank(\matroid_i)$ for $i\in \{1,2\}$ is the rank of matroid $\matroid_i$, i.e., the cardinality of each basis in $\matroid_i$ which is unique due to the augmentation property (iii). For details on matroids, we refer to \cite{oxley2006matroid}. 
 
 \paragraph{Related Literature on the Recoverable Robust Matroid problem:} Problem $(P_{\ge k})$ in the special case where $\mathcal{B}_1=\mathcal{B}_2$ is known and well-studied under the name \emph{Recoverable Robust Matroid Problem Under Interval Uncertainty Representation}, see \cite{busing2011phd, hradovich2017recoverable,hradovich2017recoverable-MST} and Section \ref{sec.RecRob} below. For this special case of $(P_{\ge k})$,
 B\"using~\cite{busing2011phd} presented an algorithm   which is exponential in $k$. 
In 2017, Hradovich, Kaperski, and Zieli\'{n}ski~\cite{hradovich2017recoverable} proved that the problem can be solved in polynomial time via some iterative relaxation algorithm and asked for a strongly polynomial time algorithm. Shortly after, the same authors  presented
in \cite{hradovich2017recoverable-MST} a strongly polynomial time primal-dual algorithm for the special case of the problem on a graphical matroid. The question whether a strongly polynomial time algorithm for $(P_{\ge k})$ with $\mathcal{B}_1=\mathcal{B}_2$  exists remained open. 

\paragraph{Our contribution.} In Section \ref{sec:reduction}, we show that both, $(P_{\le k})$ and $(P_{\ge k})$, can be polynomially reduced to weighted matroid intersection. Since weighted matroid intersection can be solved in strongly polynomial time by some very elegant primal-dual algorithm~\cite{frank1981weighted}, this answers the open question raised in \cite{hradovich2017recoverable} affirmatively. 

As we can solve $(P_{\le k})$ and $(P_{\ge k})$ in strongly polynomial time via some combinatorial algorithm, the question arises whether or not the problem with equality constraint $(P_{=k})$ can be solved in strongly polynomial time as well. The answer to this question turned out to be more tricky than expected. As our main result,
in Section \ref{sec:equality}, we provide a strongly polynomial, primal-dual  algorithm that constructs an optimal solution for $(P_{=k})$. The same algorithm can also be used to solve an extension, called $(P^{\Delta})$, of the Recoverable Robust Matroid problem under Interval Uncertainty Represenation, see Section \ref{sec.RecRob}.

Then, in Section \ref{sec:polymatroids}, we consider the generalization of  problems $(P_{\le k})$, $(P_{\ge k})$, and $(P_{= k})$ from matroids to polymatroids with  lower, upper, or equality bound constraint, respectively,  on the size of the meet $|x \wedge y| :=\sum_{e\in E} \min\{x_e, y_e\}$. Interestingly, as it turns out, the generalization of $(P_{\ge k})$ can be solved in strongly polynomial time via reduction to some polymatroidal flow problem, while  the generalizations of $(P_{\le k})$ and $(P_{= k})$ can be shown to be weakly NP-hard, already for uniform polymatroids.

Finally, in Section~\ref{sec:multistage} we discuss the generalization of our matroid problems from two to three or more matroids. That is, we consider $n$ matroids $\matroid_i=(E,\basis_i)$, $i\in [n]$, and $n$ linear cost functions $c_i:E\to \mathbb{R}$, for $i\in [n]$. The task is to find $n$ bases $X_i\in \basis_i$, $i\in [n]$, minimizing the cost $\sum_{i=1}^n c_i(X_i)$ subject to   a cardinality constraint on the size of intersection $|\bigcap_{i=1}^n X_i|$. When we are given an upper bound on the size of this intersection we can find an optimal solution within polynomial time. Though, when we have an equality or a lower bound constraint on the size of the intersection, the problem becomes strongly NP-hard.


\section{Reduction of $(P_{\le k})$ and $(P_{\ge k})$ to weighted matroid intersection}
\label{sec:reduction}
We first note that $(P_{\le k})$ and $(P_{\ge k})$ are computationally equivalent.
To see this, consider any instance $(\matroid_1, \matroid_2, k, c_1, c_2)$ of
$(P_{\ge k})$, where $\matroid_1=(E, \basis_1)$, and $\matroid_2=(E, \basis_2)$ are two matroids on the same ground set $E$ with base sets $\basis_1$ and $\basis_2$, respectively. Define $c_2^*=-c_2$, $k^*=\rank(\matroid_1)-k$, and let
$\matroid_2^*=(E, \basis_2^*)$ with $\basis_2^*=\{E\setminus{Y}\mid Y\in \basis_2\}$ be the dual matroid of $\matroid_2$. Since
\begin{itemize}
\item[(i)] $|X\cap Y| \le k \iff |X\cap (E\setminus{Y})| =|X|-|X\cap Y| \ge \rank(\matroid_1)-k=k^*$, and
\item[(ii)] $c_1(X)+c_2(Y)=c_1(X)+c_2(E)-c_2(E\setminus{Y})=
c_1(X)+c^*_2(E\setminus{Y})+c_2(E)$,
\end{itemize}
where $c_2(E)$ is a constant, it follows that 
$(X,Y)$ is a minimizer of $(P_{\ge k})$ for the given instance $(\matroid_1, \matroid_2, k, c_1, c_2)$ if and only if
$(X, E\setminus{Y})$ is a minimizer of $(P_{\le k^*})$ for 
$(\matroid_1, \matroid^*_2, k^*, c_1, c^*_2)$.
Similarly, it can be shown that any problem of type $(P_{\le k})$ polynomially reduces to an instance of type $(P_{\ge k^*})$.

\begin{theorem}
Both problems, $(P_{\le k})$ and $(P_{\ge k})$, can be reduced to weighted matroid intersection.
\end{theorem}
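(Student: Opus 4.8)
By the equivalence established above it suffices to reduce an arbitrary instance of $(P_{\ge k})$ to weighted matroid intersection: an instance of $(P_{\le k})$ is first rewritten, via the dual matroid $\matroid_2^{*}$ and the cost change $c_2 \mapsto -c_2$, as an instance of $(P_{\ge k^{*}})$ with $k^{*} = \rank(\matroid_1) - k$. Write $r_i := \rank(\matroid_i)$, so $0 \le k \le \min\{r_1,r_2\}$. The point of departure is that a feasible solution of $(P_{\ge k})$ is nothing but a triple $(S,X,Y)$ with $X \in \basis_1$, $Y \in \basis_2$, $S \subseteq X \cap Y$ and $|S| = k$: from a feasible pair $(X,Y)$ we may select any $S \subseteq X \cap Y$ with $|S| = k$, conversely every such triple yields a feasible pair, and the objective $c_1(X)+c_2(Y)$ is independent of $S$. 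The obstruction to phrasing $(P_{\ge k})$ directly as a matroid intersection is that the lower bound $|X \cap Y| \ge k$ is not downward closed; I would get around this by giving the shared set $S$ its own copy of the ground set, so that the cardinality of a common base in the auxiliary instance is forced to account for exactly $k$ shared elements.

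Concretely, take three disjoint copies $E^{S}, E^{X}, E^{Y}$ of $E$ with bijections $e \mapsto e^{S}, e^{X}, e^{Y}$, put $\Omega := E^{S} \cup E^{X} \cup E^{Y}$, and let $\phi_1 \colon E^{S} \cup E^{X} \to E$ and $\phi_2 \colon E^{S} \cup E^{Y} \to E$ be the obvious projections. Let $N_1$ be the direct sum of (i) the matroid on $E^{S} \cup E^{X}$ whose independent sets are the $F$ such that $\phi_1|_{F}$ is injective and $\phi_1(F)$ is independent in $\matroid_1$ (that is, $\matroid_1$ with every element doubled into a parallel pair $e^{S}, e^{X}$), and (ii) the uniform matroid of rank $r_2 - k$ on $E^{Y}$; symmetrically, let $N_2$ be the direct sum of the uniform matroid of rank $r_1 - k$ on $E^{X}$ and the copy of $\matroid_2$ with every element doubled into a parallel pair $e^{S}, e^{Y}$ on $E^{S} \cup E^{Y}$. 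Both $N_1$ and $N_2$ then have rank $r_1 + r_2 - k$, and both are accessible through independence oracles built from those of $\matroid_1, \matroid_2$. Finally give $e^{S}$ weight $c_1(e) + c_2(e)$, $e^{X}$ weight $c_1(e)$, and $e^{Y}$ weight $c_2(e)$, and compute a minimum-weight common base of $N_1$ and $N_2$; this is a weighted matroid intersection instance, solvable in strongly polynomial time by the primal-dual algorithm of \cite{frank1981weighted}.

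The remaining (and entirely routine) step is to verify the correspondence. Any common base $A$ of $N_1$ and $N_2$ satisfies $|A \cap E^{X}| = r_1 - k$ and $|A \cap (E^{S} \cup E^{X})| = r_1$, hence $|A \cap E^{S}| = k$; then $X := \phi_1(A \cap (E^{S} \cup E^{X})) \in \basis_1$, $Y := \phi_2(A \cap (E^{S} \cup E^{Y})) \in \basis_2$, and $S := \phi_1(A \cap E^{S}) \subseteq X \cap Y$ form a feasible triple, and the weight of $A$ equals $c_1(X) + c_2(Y)$. Conversely a feasible triple $(S,X,Y)$ gives back the common base $\{e^{S} : e \in S\} \cup \{e^{X} : e \in X \setminus S\} \cup \{e^{Y} : e \in Y \setminus S\}$ of weight $c_1(X) + c_2(Y)$. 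Hence a minimum-weight common base projects to an optimal solution of $(P_{\ge k})$, and the non-existence of a common base certifies infeasibility. The one thing one must get right is the choice of the uniform ranks $r_2 - k$ and $r_1 - k$, which is exactly what forces $N_1$ and $N_2$ to share the rank $r_1 + r_2 - k$ and thereby makes ``common base'' encode $|S| = k$.
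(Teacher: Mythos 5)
Your reduction is correct, but it takes a genuinely different route from the paper's. The paper keeps $(P_{\le k})$ as the target: it works on two copies $E_1\udot E_2$ of the ground set, takes $\mathcal{N}_1$ to be the direct sum of $\matroid_1$ and $\matroid_2$, and defines $\mathcal{N}_2$ as the ``at most $k$ pairs'' structure, which it must then verify to be a matroid from the augmentation axiom; since a feasible solution is a basis of $\mathcal{N}_1$ but only an independent set of $\mathcal{N}_2$, it also needs the large-constant trick $w(e)=C-c_i(e)$ to force maximality in $\mathcal{N}_1$. You instead attack $(P_{\ge k})$ head-on by materializing a certificate $S\subseteq X\cap Y$ with $|S|=k$ as a third copy $E^{S}$ of the ground set, and both of your auxiliary matroids are direct sums of entirely standard constructions (a parallel extension of $\matroid_i$ plus a uniform matroid), so no new matroid needs to be verified from the axioms; the rank bookkeeping $r_1+r_2-k$ on both sides then makes ``common base'' literally equivalent to $|S|=k$, and you land directly in the min-cost common base form of weighted matroid intersection with the true costs, no large constant needed. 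The trade-off is a ground set of size $3|E|$ rather than $2|E|$ and reliance on the (stated but not detailed) reverse direction of the paper's $(P_{\le k})\leftrightarrow(P_{\ge k^*})$ equivalence, versus the paper's more compact encoding at the price of an ad hoc matroid and the weight-shifting argument. Both are complete and valid proofs of the theorem.
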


\begin{proof}
By our observation above, it suffices to show that $(P_{\le k})$
can be reduced to weighted matroid intersection.
Let $\tilde{E} := E_{1} \udot E_{2}$, where $E_{1}, E_{2}$ are two copies of our original ground set $E$. We consider $\mathcal{N}_{1} = (\tilde{E}, \tilde{\inds}_{1}), \mathcal{N}_{2} = (\tilde{E}, \tilde{\inds}_{2})$, two special types of matroids on this new ground set $\tilde{E}$, 
where $\inds_{1}, \inds_{2}, \tilde{\inds}_{1}, \tilde{\inds}_{2}$ are the sets of independent sets of $\matroid_{1}, \matroid_{2}, \mathcal{N}_{1}, \mathcal{N}_{2}$ respectively.
Firstly, let
$\mathcal{N}_{1} = (\tilde{E}, \tilde{\inds}_{1})$ be the direct sum of $\matroid_{1}$ on $E_{1}$ and $\matroid_{2}$ on $E_{2}$. That is, for $A \subseteq \tilde{E}$ it holds that $A \in \tilde{\inds}_{1}$ if and only if $A \cap E_{1} \in \inds_{1}$ and $A \cap E_{2} \in \inds_{2}$.

The second matroid $\mathcal{N}_{2} = (\tilde{E}, \tilde{\inds}_{2})$ is defined 
as follows: we call $e_{1} \in E_{1}$ and $e_{2} \in E_{2}$  a pair, if $e_{1}$ and $e_{2}$ are copies of
the same element in $E$. If $e_{1}, e_{2}$ are a pair then we call $e_{2}$ the sibling of $e_{1}$ and vice versa. 
Then
\[\tilde{\inds}_{2} := \{ A \subseteq \tilde{E} \colon A \mbox{ contains at most $k$ pairs}\}.\]
For any $A \subseteq \tilde{E}$, $X=A\cap E_1$ and $Y=A\cap E_2$ forms a feasible solution for
$(P_{\le k})$ if and only if
$A$ is a basis in matroid $\mathcal{N}_1$ and independent in matroid $\mathcal{N}_2.$
Thus, $(P_{\le k})$ is equivalent to the weighted matroid intersection problem
$$\max\{w(A) \colon A\in \tilde{\inds}_{1} \cap \tilde{\inds}_{2}\},$$
with weight function
\[
w(e)=
\begin{cases}
C-c_1(e) & \mbox{ if } e\in E_1,\\
C-c_2(e) & \mbox{ if } e\in E_2,
\end{cases}
\]
for some constant $C>0$ chosen large enough to ensure that $A$ is a basis in $\mathcal{N}_1$.
To see that $\mathcal{N}_2$ is indeed a matroid, we first observe that 
$\tilde{\inds}_{2}$ is non-empty and downward-closed (i.e., $A\in \tilde{\inds}_{2}$, and $B\subset A$ implies $B\in \tilde{\inds}_{2}$).
To see that $\tilde{\inds}_{2}$ satisfies the matroid-characterizing augmentation property
$$A,B\in \tilde{\inds}_{2} \mbox{ with } |A| \le |B| \mbox{ implies } \exists
e\in B\setminus{A} \mbox{ with } A+e \in \tilde{\inds}_{2},$$
take any two independent sets
$A,B\in \tilde{\inds}_{2}$. If $A$ cannot be augmented from $B$, i.e., if
$A+e\not\in \tilde{\inds}_{2}$ for every $e\in B\setminus{A}$, then
$A$ must contain exactly $k$ pairs, and for each  $e\in B\setminus{A}$, the sibling of $e$ must be contained in $A$. This implies $|B| \le |A|$, i.e., $\mathcal{N}_2$ is a matroid.
\end{proof}

Weighted matroid intersection is known to be solvable within strongly polynomial time (e.g., see Frank~\cite{frank1981weighted}). Hence, both $(P_{\leq k})$ and $(P_{\geq k})$ can be solved in strongly polynomial time.

The same result can be obtained by a reduction to independent matching (see
Appendix~\ref{app:independentmatching}), which in bipartite graphs is known to be solvable within
strongly polynomial time as well (see~\cite{iri1976algorithm}). 

\section{A strongly polynomial primal-dual algorithm for $(P_{=k})$} \label{sec:equality}
We saw in the previous section that both problems, $(P_{\le k})$ and $(P_{\ge k})$,
can be solved in strongly polynomial time via a weighted matroid intersection algorithm.
This leads to the question whether we can solve
the problem $(P_{=k})$ with equality constraint on the size of the intersection efficiently as well.

\paragraph{Some challenges.}
At first sight it seems that we could  use the same construction we  used in Section~\ref{sec:reduction} to show that  $(P_{\le k})$ can be reduced to matroid intersection, and simply ask whether there exists a solution $A\subseteq \tilde{E}$ which is a basis in both, $\mathcal{N}_1$ and $\mathcal{N}_2$. Note, however, that a feasible solution to 
$(P_{=k})$ corresponds to a set $A$ which is a basis in $\mathcal{N}_1$ and an independent set in $\mathcal{N}_2$ with exactly $k$ elements, which is not necessarily  a basis in $\mathcal{N}_2$.
An alternative approach would be to consider reductions to more general, still
efficiently solvable, combinatorial optimization problems. When studying this
problem, it turned out that there are several alternative ways of proving that
$(P_{\le k})$ and $(P_{\ge k})$ can be solved in strongly polynomial time. For
example,  via reduction to  independent bipartite matching (see
Appendix~\ref{app:independentmatching}), or to independent path-matching
(see~\cite{cunningham1997optimal}), or to the submodular flow problem. All of these problems generalize matroid intersection and are still solvable in strongly polynomial time. However, we did not find a way to modify one of those reductions to settle our problem $(P_{=k})$.
In Appendix~\ref{app:independentmatching}, we comment shortly on the main difficulties.

\subsection{The algorithm}
In this section, we describe a primal-dual strongly polynomial algorithm for $(P_{=k})$.
Our algorithm can be seen as a generalization of the algorithm presented by
Hradovich et al. in~\cite{hradovich2017recoverable}.
However, the analysis of our algorithm turns out to be much simpler than the one in \cite{hradovich2017recoverable}.


Let us consider the following piecewise linear concave curve $$\mbox{val}{(\lambda)}=\min_{X\in \basis_1, Y\in \basis_2} c_1(X)+c_2(Y)-\lambda |X\cap Y|,$$
which depends on parameter $\lambda\ge 0$.

\begin{figure}
\begin{center}
\begin{tikzpicture}[yscale=0.75,xscale=3,
    thick,
    >=stealth',
    dot/.style = {
      draw,
      fill = white,
      circle,
      inner sep = 0pt,
      minimum size = 4pt
    }
  ]
  \coordinate (O) at (0,0);
  \draw[->] (-0.0, 0.0) -- (4,0) coordinate[label = {below:}] (xmax);
  \draw[->] ( 0.0,-0.0) -- (0,5) coordinate[label = {right:}] (ymax);
  
  \node (y) at (0,3) {};  
  \draw (y) [dotted] -- (y |- O) node[dot, draw=black,dash pattern=, label = {below:$\lambda_0$}] {};
  
  \node (y1) at (2,3) {};  
  \draw (y1) [dotted] -- (y1 |- O) node[dot, draw=black,dash pattern=, label = {below:$\lambda_1$}] {};
  
  \node (y2) at (3,2) {};  
  \draw (y2) [dotted] -- (y2 |- O) node[dot, draw=black,dash pattern=, label = {below:$\lambda_2,\lambda_3$}] {};
  
  \node (y3) at (3.5,0.5) {};  
  \draw (y3) [dotted] -- (y3 |- O) node[dot, draw=black,dash pattern=, label = {below:$\lambda_4$}] {};

  \draw[scale=1,domain=0:4,smooth,variable=\x,red] plot (\x,3);
  \draw[scale=1,domain=0:4,smooth,variable=\x,red!50!yellow!50] plot (\x,5-\x);
  \draw[scale=1,domain=1.6:4,smooth,variable=\x,yellow] plot (\x,8-2*\x);
  \draw[scale=1,domain=2:3.7,smooth,variable=\x,green] plot (\x,11-3*\x);
  \draw[scale=1,domain=2.2:3.7,smooth,variable=\x,blue] plot (\x,14.5-4*\x);
\end{tikzpicture}
\end{center}
\caption{Visualizing $\val(\lambda)$}
\label{fig:vallambda}
\end{figure}
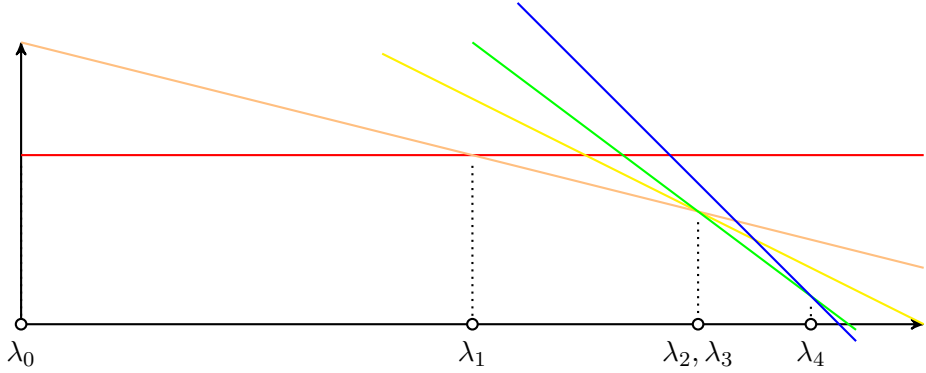

Note that $\val(\lambda)+k \lambda$ is the Lagrangian relaxation of problem $(P_{=k})$. Observe that any base pair $(X,Y)\in \basis_1\times \basis_2$ determines a line
$L_{(X,Y)}(\lambda)$ that hits the vertical axis at $c_1(X)+c_2(Y)$ and has negative slope $|X\cap Y|$. Thus, val$(\lambda)$ is the lower envelope of all such lines. It follows that every base pair $(X,Y)\in \basis_1\times \basis_2$
  which intersects with curve val$(\lambda)$ in either a segment or a breakpoint, and with $|X\cap Y|=k$, is a minimizer of $(P_{=k})$.
  
\paragraph{Sketch of our algorithm.} We first solve the problem
$$\min\{c_1(X)+c_2(Y)\mid X\in \basis_1, \ Y\in \basis_2\},$$
without any constraint on the intersection. Note that this problem can be solved with a matroid greedy algorithm. Let $(\bar{X}, \bar{Y})$ be an optimal solution of this problem.
\begin{enumerate}
\item
If $|\bar{X}\cap \bar{Y}|=k$, we are done as $(\bar{X}, \bar{Y})$ is optimal for 
$(P_{=k})$.
\item Else, if $|\bar{X}\cap \bar{Y}|=k'<k$, our algorithm  starts with the optimal solution $(\bar{X}, \bar{Y})$ for $(P_{=k'})$, and iteratively increases $k'$ by one until $k'=k$. Our algorithm maintains as invariant an optimal solution $(\bar{X}, \bar{Y})$ for the current problem $(P_{=k'})$, together with some dual optimal solution $(\bar{\alpha}, \bar{\beta})$ satisfying the optimality conditions, stated in Theorem \ref{thm:suffpairopt} below, for the current breakpoint $\bar{\lambda}$. Details of the algorithm are described below.
\item Else, if $|\bar{X}\cap \bar{Y}|>k$, we instead consider  an instance of $(P_{=k^*})$ for
$k^*=\rank(\matroid_1)-k$, costs $c_1$ and $c_2^*=-c_2$, and the two matroids
$\matroid_1=(E, \basis_1)$ and $\matroid_2^*=(E, \basis_2^*)$. As seen above, an optimal solution $(X, E\setminus{Y})$ of  problem $(P_{=k^*})$ corresponds to an optimal solution $(X, Y)$ of our original problem $(P_{=k})$, and vice versa. Moreover, $|\bar{X}\cap \bar{Y}|>k$ for the initial base pair $(\bar{X}, \bar{Y})$ implies that
$|\bar{X}\cap (E\setminus{\bar{Y}})|=|\bar{X}|-|\bar{X}\cap \bar{Y}|<k^*$.
Thus, starting with the initial feasible solution $(\bar{X}, E\setminus{\bar{Y}})$ for
$(P_{=k^*})$, we can iteratively
increase $|\bar{X}\cap (E\setminus{\bar{Y}})|$ until $|\bar{X}\cap (E\setminus{\bar{Y}})|=k^*,$ as described in step 2.
\end{enumerate}

Note that a slight modification of the algorithm allows to compute the optimal solutions $(\bar{X}_k, \bar{Y}_k)$ for all $k\in \{0, \ldots, K\}$ in only two runs: run the algorithm for $k=0$ and for $k=K$.

\paragraph{An optimality condition.}
The following optimality condition  turns out to be crucial for the design of our algorithm.

\begin{theorem}[Sufficient pair optimality conditions]\label{thm:suffpairopt}
For fixed $\lambda\ge 0$, base pair
    $(X,Y)\in \basis_1\times \basis_2$ is a minimizer of $\val(\lambda)$ if there exist $\alpha,\beta\in \mathbb{R}^{|E|}_+$ such that 
    \begin{itemize}
        \item[(i)] $X$ is a min cost basis for $c_{1} - \alpha$,
       and $Y$ is a min cost basis for $c_{2} - \beta$;
        \item[(ii)] $\alpha_{e} = 0$ for $e \in X\setminus{Y}$, and
        $\beta_{e} = 0$ for $e \in Y\setminus{X}$;
         \item[(iii)]  $\alpha_{e} + \beta_{e} = \lambda$ for each $ e
    \in E$.
    \end{itemize}
\end{theorem}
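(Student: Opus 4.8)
The plan is to verify directly that any base pair $(X,Y)$ satisfying (i)--(iii) attains the value $\val(\lambda)$, i.e., that $c_1(X)+c_2(Y)-\lambda|X\cap Y|$ is a lower bound for $c_1(X')+c_2(Y')-\lambda|X'\cap Y'|$ over all $(X',Y')\in\basis_1\times\basis_2$. First I would fix an arbitrary competitor pair $(X',Y')$. Using (i), $X$ is a min-cost basis of $\matroid_1$ with respect to the modified cost $c_1-\alpha$, so $(c_1-\alpha)(X)\le (c_1-\alpha)(X')$, and similarly $(c_2-\beta)(Y)\le (c_2-\beta)(Y')$. Adding these gives
\[
c_1(X)+c_2(Y)\ \le\ c_1(X')+c_2(Y') + \alpha(X)-\alpha(X') + \beta(Y)-\beta(Y').
\]
The task then reduces to bounding the "correction term" $\alpha(X)-\alpha(X')+\beta(Y)-\beta(Y')$ from above by $\lambda|X\cap Y| - \lambda|X'\cap Y'|$.

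The key computation is to evaluate $\alpha(X)+\beta(Y)$ exactly. Splitting $X = (X\cap Y)\,\dot\cup\,(X\setminus Y)$ and $Y=(X\cap Y)\,\dot\cup\,(Y\setminus X)$, condition (ii) kills $\alpha$ on $X\setminus Y$ and $\beta$ on $Y\setminus X$, so $\alpha(X)+\beta(Y) = \sum_{e\in X\cap Y}(\alpha_e+\beta_e) = \lambda|X\cap Y|$ by (iii). For the competitor, I would bound $\alpha(X')+\beta(Y')$ from below: since $\alpha,\beta\ge 0$, we have $\alpha(X')\ge \alpha(X'\cap Y')$ and $\beta(Y')\ge\beta(X'\cap Y')$, hence $\alpha(X')+\beta(Y')\ge\sum_{e\in X'\cap Y'}(\alpha_e+\beta_e)=\lambda|X'\cap Y'|$, again using (iii). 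Substituting both into the displayed inequality yields
\[
c_1(X)+c_2(Y)\ \le\ c_1(X')+c_2(Y') + \lambda|X\cap Y| - \lambda|X'\cap Y'|,
\]
which rearranges to $c_1(X)+c_2(Y)-\lambda|X\cap Y|\le c_1(X')+c_2(Y')-\lambda|X'\cap Y'|$. Since $(X',Y')$ was arbitrary, $(X,Y)$ is a minimizer of $\val(\lambda)$.

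There is no serious obstacle here; this is a short LP/combinatorial duality argument. The only point needing a little care is the direction of the inequalities in the correction term: one needs an \emph{exact} evaluation $\alpha(X)+\beta(Y)=\lambda|X\cap Y|$ for the candidate pair (this is where (ii) is essential — without it the slack on $X\setminus Y$ and $Y\setminus X$ could spoil the bound), whereas for the competitor pair only the \emph{lower} bound $\alpha(X')+\beta(Y')\ge\lambda|X'\cap Y'|$ is needed, and that uses only nonnegativity of $\alpha,\beta$ together with (iii). It is worth remarking that the conditions are stated as merely sufficient: they are the complementary-slackness conditions of a natural LP relaxation, and the reason they need not be necessary is that the Lagrangian/LP bound $\val(\lambda)$ can fail to be tight for a given pair even when that pair is combinatorially optimal for $(P_{=k})$ at a breakpoint — but for the purposes of this theorem only the easy ("if") direction is claimed, so the above suffices.
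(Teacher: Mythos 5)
Your proof is correct, and it takes a genuinely more elementary route than the paper. The paper proves this theorem in Appendix~B by writing out the full LP relaxation $(P_{\lambda})$ of $\val(\lambda)$ with dual variables $(w,\mu,v,\nu,\alpha,\beta)$, applying strong LP duality to the two inner subproblems over the matroid base polytopes to rewrite the dual as $\max_{\alpha,\beta}\bigl(\min_{X\in\basis_1}(c_1(X)-\alpha(X))+\min_{Y\in\basis_2}(c_2(Y)-\beta(Y))\bigr)$ subject to $\alpha_e+\beta_e\ge\lambda$, and then checking that conditions (i)--(iii) force the primal and dual objective values to coincide; the key identity there is the same one you isolate, namely $\alpha(X)+\beta(Y)=\sum_{e\in X\cap Y}(\alpha_e+\beta_e)=\lambda|X\cap Y|$. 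Your argument unrolls this into a direct weak-duality computation against an arbitrary competitor $(X',Y')$: condition (i) gives $(c_1-\alpha)(X)+(c_2-\beta)(Y)\le(c_1-\alpha)(X')+(c_2-\beta)(Y')$, the exact evaluation $\alpha(X)+\beta(Y)=\lambda|X\cap Y|$ uses (ii) and (iii), and the one-sided bound $\alpha(X')+\beta(Y')\ge\lambda|X'\cap Y'|$ uses only nonnegativity and (iii). This avoids the LP formulation entirely (in particular, it does not need the integrality of the matroid base polytope or strong duality for the inner LPs), which makes it shorter and self-contained. What the paper's heavier route buys is the interpretation of $(\alpha,\beta)$ as genuine dual variables of an explicit LP, so that (i)--(iii) are revealed as complementary slackness conditions --- the structural fact that drives the design of the primal-dual algorithm in Section~3. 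Your closing remark correctly notes that only the sufficiency direction is claimed and used.
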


The proof of Theorem~\ref{thm:suffpairopt} can be found in Appendix~\ref{app:suffpairopt}.

\paragraph{Construction of the auxiliary digraph.} Given a tuple
$(X,Y,\alpha,\beta, \lambda)$ satisfying the optimality conditions stated in
Theorem \ref{thm:suffpairopt}, we construct a digraph $D=D(X,Y,\alpha,\beta)$
with red-blue colored arcs  as follows (see Figure~\ref{fig:auxgraph}):
\begin{itemize}
\item one vertex for each element in $E$;
\item a red arc $(e,f)$ if $e\not\in X$, $X-f+e \in \basis_1$,
and $c_1(e)-\alpha_e=c_1(f)-\alpha_f$; and
\item a blue arc $(f,g)$ if $g\not\in Y$, $Y-f+g\in \basis_2$, 
and $c_2(g)-\beta_g=c_2(f)-\beta_f.$
\end{itemize}

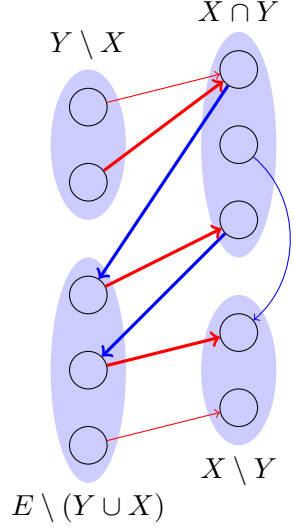
\begin{figure}[ht]
\begin{center}

\begin{tikzpicture}[scale=0.5]
  \node[fill=blue!20,ellipse,minimum width=1cm, minimum height=2cm,label={above:$Y\setminus X$}] (A) at (0,8) {};
  \node[fill=blue!20,ellipse,minimum width=1cm, minimum height=3cm,label={below:$E \setminus (Y\cup X)$}] (B) at (0,2) {};
  \node[fill=blue!20,ellipse,minimum width=1cm, minimum height=3cm,label={above:$X\cap Y$}] (C) at (4,8) {};
  \node[fill=blue!20,ellipse,minimum width=1cm, minimum height=2cm,label={below:$X\setminus Y$}] (D) at (4,2) {};
  \node[draw,circle,minimum size=5mm] (yx1) at (0,9) {};
  \node[draw,circle,minimum size=5mm] (yx2) at (0,7) {};
  \node[draw,circle,minimum size=5mm] (exy1) at (0,4) {};
  \node[draw,circle,minimum size=5mm] (exy2) at (0,2) {};
  \node[draw,circle,minimum size=5mm] (exy3) at (0,0) {};  
  
  \node[draw,circle,minimum size=5mm] (yx21) at (4,10) {};
  \node[draw,circle,minimum size=5mm] (yx22) at (4,8) {};
  \node[draw,circle,minimum size=5mm] (yx23) at (4,6) {};
  \node[draw,circle,minimum size=5mm] (xy31) at (4,3) {};
  \node[draw,circle,minimum size=5mm] (xy32) at (4,1) {};
  
  \draw[red,->] (yx1) -- (yx21);
  \draw[red,very thick, ->] (yx2) -- (yx21);
  
  \draw[blue, ->] (yx22) to [bend left=50] (xy31) ; 
   \draw[blue,very thick, ->] (yx21) -- (exy1);
   \draw[blue,very thick, ->] (yx23) -- (exy2);
  
   \draw[red,very thick,->] (exy1) -- (yx23);
  
  \draw[red,very thick, ->] (exy2) -- (xy31);
  \draw[red,->] (exy3) -- (xy32);
\end{tikzpicture}
\end{center}
\caption{Auxiliary graph constructed in the algorithm for $(P_{=k})$.}
\label{fig:auxgraph}
\end{figure}

\emph{Note:} Although not depicted in Figure~\ref{fig:auxgraph}, there might well be 
blue arcs going  from $Y\setminus{X}$ to either $E\setminus{(X\cup Y)}$ or  $X\setminus{Y}$,
or 
red arcs going from  $Y\setminus{X}$ to $X\setminus{Y}$.

Observe that any red arc $(e,f)$ represents a move in $\mathcal{B}_1$ from $X$ to  $
X \cup \{e\} \setminus \{f\}\in \mathcal{B}_1$. To shorten notation, we write $X \cup \{e\} \setminus \{f\} := X \oplus (e,
f)$. Analogously, any blue arc $(e,f)$ represents a move from $Y$ to
 $Y \cup \{f\} \setminus \{e\}\in \mathcal{B}_2$. Like above, we write $Y \cup \{f\} \setminus \{e\}:=Y\oplus
(e, f)$. 
Given a red-blue alternating path $P$ in $D$ we denote
by $X' = X \oplus P$ the set  obtained from $X$ by performing
all moves corresponding to red arcs, and, accordingly, by
$Y' = Y \oplus P$ the set  obtained from $Y$ by performing
all moves corresponding to blue arcs.

A. Frank proved the following result about sequences of moves, to show correctness of the weighted matroid intersection algorithm.
\begin{lem}[Frank~\cite{frank1981weighted}, {\cite[Lemma 13.35]{korte2007combinatorial}}]
    Let $\matroid = (E, \inds)$ be a matroid, $c \colon E \ra \rr$ and $X \in \inds$. Let $x_{1}, \dots, x_{l} \in X$ and $y_{1}, \dots, y_{l} \notin X$ with
    \begin{enumerate}
        \item $X + y_{j} - x \in \inds$ and $c(x_{j}) = c(y_{j})$ for $j=1,\dots,l$ and
        \item $X + y_{j} -x_{i} \notin \inds$ or $c(x_{i}) > c(y_{j})$ for $1 \leq i,j \leq l$ with $i \neq j$.
    \end{enumerate}
    Then $(X \setminus \{x_{1}, \dots, x_{l}\}) \cup \{y_{1}, \dots, y_{l}\} \in \inds$.
\end{lem}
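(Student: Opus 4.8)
The plan is to prove the statement \emph{constructively}: starting from $X\setminus\{x_1,\dots,x_l\}$, I would add the elements $y_1,\dots,y_l$ back one at a time, maintaining independence at every stage, and processing them in order of non-increasing cost. The first step is to relabel the indices so that $c(y_1)\ge c(y_2)\ge\dots\ge c(y_l)$; this is harmless because the two hypotheses are invariant under a simultaneous permutation of the pairs $(x_j,y_j)$, and by the cost equalities in hypothesis~1 the $x_j$ then also appear in non-increasing cost order. Then set $S_0:=X\setminus\{x_1,\dots,x_l\}$ and $S_t:=S_0\cup\{y_1,\dots,y_t\}$ for $t=1,\dots,l$, so that $S_l$ is exactly the set whose independence is to be shown, and prove $S_t\in\inds$ by induction on $t$.

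The base case $S_0\in\inds$ holds because $S_0\subseteq X$. For the inductive step, assume $S_{t-1}\in\inds$. The crux is the claim that every element of $S_{t-1}$ lies in the closure of $X-x_t$. For the elements of $S_0$ this is immediate, as they already lie in $X-x_t$. For an element $y_s$ with $s<t$: hypothesis~2, applied with $i=t$ and $j=s$ (legitimate since $t\ne s$), gives that $X+y_s-x_t\notin\inds$ or $c(x_t)>c(y_s)$; but $c(x_t)=c(y_t)\le c(y_s)$ by the chosen ordering, so the second option is impossible, whence $X+y_s-x_t\notin\inds$, which --- since $X-x_t$ is independent and $y_s\notin X-x_t$ --- says precisely that $y_s$ lies in the closure of $X-x_t$. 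Hence the closure of $S_{t-1}$ is contained in the closure of $X-x_t$. By the independence part of hypothesis~1, $X+y_t-x_t\in\inds$, so $y_t$ is \emph{not} in the closure of $X-x_t$, and therefore not in the closure of $S_{t-1}$; combined with $S_{t-1}\in\inds$ this gives $S_t=S_{t-1}+y_t\in\inds$. Taking $t=l$ completes the proof.

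The step I would watch most carefully is the choice of processing order, on which everything hinges: one needs that, at the moment $y_t$ is inserted, every $y_s$ already present is spanned by $X-x_t$, so that the only fact about $y_t$ supplied by hypothesis~1 --- that $y_t$ avoids the closure of $X-x_t$ --- suffices to keep $S_t$ independent. Non-increasing cost order is exactly what forces the alternative $c(x_t)>c(y_s)$ in hypothesis~2 to fail for every relevant pair $s<t$, and the strictness of that inequality is what keeps this valid even when several of the $y_j$ have equal cost. Conceptually, hypotheses~1 and~2 together say that, in the bipartite exchange graph on $\{x_1,\dots,x_l\}$ and $\{y_1,\dots,y_l\}$ --- with an edge $x_iy_j$ whenever $X-x_i+y_j\in\inds$ --- the identity matching is the \emph{unique} perfect matching: any other one would force $\sum_i c(x_i)>\sum_i c(y_i)$, contradicting the cost equalities in hypothesis~1. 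The construction above is a cost-guided way of converting that uniqueness into the simultaneous exchange. Apart from the ordering, the argument uses only that $X-x_t$ is independent (as $X$ is), that adding an unspanned element to an independent set keeps it independent whereas adding a spanned element makes it dependent, and monotonicity of the matroid closure operator.
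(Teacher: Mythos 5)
The paper does not prove this lemma at all --- it is quoted from Frank and from Korte--Vygen, so there is no in-paper argument to compare against. Your proof is correct and is essentially the standard textbook argument: sort the pairs by non-increasing cost of the $y_j$, insert the $y_t$ one at a time into $X\setminus\{x_1,\dots,x_l\}$, and observe that every element already present is spanned by $X-x_t$ (the old elements trivially, the earlier $y_s$ because the ordering kills the alternative $c(x_t)>c(y_s)$ in hypothesis~2, and strictness of that inequality handles ties), while $y_t$ itself is not spanned by $X-x_t$ by hypothesis~1. Each closure step you invoke (dependent extension of an independent set lies in the span; monotonicity and idempotence of closure; an element outside the span of an independent set can be added) is standard, and the permutation-invariance of the hypotheses justifies the relabelling. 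Your closing remark, that the hypotheses say the identity is the unique perfect matching in the exchange graph and that this is what drives the lemma, is also accurate. One small point: the statement as printed contains a typo ($X+y_j-x$ should read $X+y_j-x_j$), which you silently and correctly repaired.
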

This suggests the following definition.

\paragraph{Augmenting paths.}
We call any shortest (w.r.t.\ number of arcs) red-blue alternating path linking a vertex in $Y\setminus{X}$ to a vertex in $X\setminus{Y}$ an \emph{augmenting path}.

\begin{lem}\label{lem:franklemma}
If $P$ is an augmenting path in $D$, then 
\begin{itemize}
\item $X'=X\oplus P$ is min cost basis in $\basis_1$ w.r.t.\ costs $c_1-\alpha$,
\item $Y'=Y\oplus P$ is min cost basis in $\basis_2$ w.r.t.\ costs $c_2-\beta$, and
\item $|X'\cap Y'|=|X\cap Y|+1$.
\end{itemize}
\end{lem}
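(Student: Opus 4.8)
\textbf{Proof plan for Lemma~\ref{lem:franklemma}.}

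The plan is to apply Frank's Lemma twice, once for each matroid, and then to account for the change in the size of the intersection by a parity/structure argument on the augmenting path. Write the augmenting path as $P = (v_0, v_1, \dots, v_{2m+1})$ where $v_0 \in Y\setminus X$ and $v_{2m+1} \in X\setminus Y$. Since $P$ links a vertex of $Y\setminus X$ to a vertex of $X\setminus Y$ and alternates colors, I expect the arcs to alternate between blue and red in a fixed pattern determined by which color can leave $Y\setminus X$; the key structural point is that the red arcs of $P$ describe a set of exchange moves on $X$ and the blue arcs describe a set of exchange moves on $Y$. To invoke Frank's Lemma for $\matroid_1$ with cost $c_1 - \alpha$, I would collect the red arcs of $P$, say $(e_1,f_1), \dots, (e_r, f_r)$ with $e_j \notin X$ and $f_j \in X$; condition~1 of Frank's Lemma is exactly the defining property of a red arc (the exchange $X - f_j + e_j \in \basis_1$ and $c_1(e_j)-\alpha_{e_j} = c_1(f_j)-\alpha_{f_j}$), so it holds by construction. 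The analogous statement holds for $\matroid_2$ with the blue arcs and cost $c_2 - \beta$.

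The first real work is verifying condition~2 of Frank's Lemma, i.e.\ that for $i \neq j$ either $X - f_i + e_j \notin \basis_1$ or $c_1(f_i) - \alpha_{f_i} > c_1(e_j) - \alpha_{e_j}$ — and this is where shortestness of $P$ is used. If $X - f_i + e_j \in \basis_1$ and the costs were equal, then $(e_j, f_i)$ would itself be a red arc of $D$, giving a shortcut that either shortens $P$ or (if it produces an equally long path) contradicts the specific shortest path chosen; the standard argument here is that a chord of a shortest alternating path between two consecutive same-color vertices yields a strictly shorter alternating path between the endpoints, contradicting minimality. I would spell this out carefully: a red chord $(e_j, f_i)$ with, say, $e_j$ appearing later than $f_i$ along $P$ lets us splice out the portion of $P$ between $f_i$ and $e_j$, and because that spliced-out portion has even length (it runs between two vertices that play the same role in the alternation), the result is still a valid red-blue alternating path from $v_0$ to $v_{2m+1}$ but strictly shorter — contradiction. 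The same reasoning handles blue chords for $\matroid_2$. Once condition~2 is checked, Frank's Lemma gives $X' = (X \setminus \{f_1,\dots,f_r\}) \cup \{e_1,\dots,e_r\} = X \oplus P \in \basis_1$, and since all exchanges preserve the cost $c_1 - \alpha$ (each move swaps elements of equal modified cost), $X'$ is again a \emph{min} cost basis for $c_1 - \alpha$: it has the same $(c_1-\alpha)$-cost as the min cost basis $X$. Symmetrically $Y' = Y \oplus P$ is a min cost basis for $c_2 - \beta$.

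Finally I would prove $|X' \cap Y'| = |X\cap Y| + 1$ by tracking, vertex by vertex along $P$, how membership in $X$ and in $Y$ changes. Each internal vertex of $P$ that is the head of a red arc enters $X$ or is the tail of a red arc and leaves $X$ (and does not change its $Y$-status, since only blue arcs modify $Y$), and symmetrically for blue arcs; the endpoints $v_0 \in Y\setminus X$ and $v_{2m+1}\in X\setminus Y$ are the asymmetric ones. Concretely, $v_0$ was in $Y$ but not $X$; after the moves it should end up in neither or in both — the point is to show the net effect on $X\cap Y$ over all of $P$ is exactly $+1$. I would organize this as: every internal vertex contributes $0$ to the change in $|X\cap Y|$ because the alternation pattern forces its $X$-status and $Y$-status to change in a way that either keeps it in the intersection or keeps it out (a vertex that is the head of a blue arc and the tail of a red arc, for instance, enters $Y$ and leaves $X$, so it is not in $X'\cap Y'$ just as it was not in $X\cap Y$), while the two endpoints together contribute $+1$ (roughly: $v_0$ joins $X$ and stays in $Y$, or $v_{2m+1}$ stays in $X$ and joins $Y$, depending on the color pattern). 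The main obstacle I anticipate is bookkeeping the several cases for the color of the first and last arc and for the four possible $(X,Y)$-membership types of each internal vertex; the cleanest route is probably to define, for each vertex $v$, indicator variables for $v\in X$ and $v\in Y$ before and after, observe that a red arc $(e,f)$ toggles exactly the $X$-indicators of $e$ (from $0$ to $1$) and $f$ (from $1$ to $0$) and a blue arc toggles exactly the $Y$-indicators, and then sum $\sum_v (\mathbf{1}[v\in X']\mathbf{1}[v\in Y'] - \mathbf{1}[v\in X]\mathbf{1}[v\in Y])$ telescoping along $P$. I do not expect surprises beyond this casework, since the matroid content is entirely carried by Frank's Lemma.
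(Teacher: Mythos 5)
Your overall route is the same as the paper's: the paper's (very terse) proof of this lemma is exactly ``apply Frank's exchange lemma once per matroid; the intersection count follows from the construction of $D$.'' Your telescoping argument for $|X'\cap Y'|=|X\cap Y|+1$ is a correct filling-in of the counting part, and your observation that every swap exchanges elements of equal reduced cost, so that $X'$ remains a \emph{min} cost basis, is also fine. The problem is in the one step where the real matroid content lives, namely your verification of condition~2 of Frank's Lemma.

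Condition~2 fails not only in the case you treat (cross-exchange feasible and reduced costs equal) but also when $X-f_i+e_j\in\basis_1$ and $c_1(e_j)-\alpha_{e_j}>c_1(f_i)-\alpha_{f_i}$. By optimality of $X$ for $c_1-\alpha$, a feasible cross-exchange always satisfies $c_1(e_j)-\alpha_{e_j}\ge c_1(f_i)-\alpha_{f_i}$, so the disjunct ``$c(x_i)>c(y_j)$'' in the lemma as quoted (which is oriented for the maximization convention) can never rescue you; when the inequality is strict there is no red arc $(e_j,f_i)$ in $D$ (arcs require \emph{equal} reduced costs), so no shortcut argument is available, and the hypothesis of the lemma in the ``for all $i\neq j$'' form is simply not satisfied in general. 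The standard repair, which your proposal is missing, is to use the version of the exchange lemma in which condition~2 is required only for $i<j$ and to \emph{choose the ordering} of the pairs: sort them by decreasing reduced cost $c_1(e_j)-\alpha_{e_j}$, breaking ties by position along $P$, so that every violating pair with $i<j$ forces an equal-cost feasible exchange, i.e.\ an actual red arc of $D$ that is a \emph{forward} chord of $P$. Relatedly, your splicing argument has the direction reversed: the arc $(e_j,f_i)$ points from $e_j$ to $f_i$, so it shortcuts $P$ only when $e_j$ \emph{precedes} $f_i$ on $P$; a backward chord does not contradict shortestness, which is precisely why the tie-breaking in the ordering cannot be skipped. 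Without the ordering and the correct chord direction, the invocation of Frank's Lemma is not justified.
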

\begin{proof}
    By Lemma~\ref{lem:franklemma}, we know that $X'=X\oplus P$ is min cost basis in $\basis_1$ w.r.t.\ costs $c_1-\alpha$, and $Y'=Y\oplus P$ is min cost basis in $\basis_2$ w.r.t.\ costs $c_2-\beta$. The fact that the intersection is increased by one follows directly by the construction of the digraph.
    \end{proof}
    
\paragraph{Primal update:} Given $(X,Y,\alpha,\beta, \lambda)$ satisfying the optimality conditions and the associated digraph $D$, we update
$(X,Y)$ to $(X', Y')$ with $X'=X\oplus P$, and $Y'=Y\oplus P$, as long as some augmenting path $P$ exists in $D$.
It follows by construction and Lemma~\ref{lem:franklemma} that in each iteration $(X',Y',\alpha, \beta, \lambda)$ satisfies the optimality conditions and that $|X'\cap Y'|=|X\cap Y|+1$.

\paragraph{Dual update:} If $D$ admits no augmenting path, and $|X\cap Y| < k$,  let $R$ denote the set of vertices/elements which are reachable from $Y\setminus{X}$ on some red-blue alternating path. Note that $Y\setminus{X}\subseteq R$ and $(X\setminus{Y})\cap R=\emptyset$.
For each $e\in E$ define the residual costs
$$\bar{c}_1(e):=c_1(e)-\alpha_e, \quad\mbox{ and }\quad \bar{c}_2(e):=c_2(e)-\beta_e.$$
Note that, by optimality of $X$ and $Y$ w.r.t.\ $\bar{c}_1$ and $\bar{c}_2$, respectively,
we have $\bar{c}_1(e)\ge \bar{c}_1(f)$ whenever $X-f+e\in \basis_1$, and
$\bar{c}_2(e)\ge \bar{c}_2(f)$ whenever $Y-f+e\in \basis_2$.

We compute a  "step length" $\delta>0$ as follows: Compute $\delta_1$ and $\delta_2$ via
\[
\delta_1:=\min
\{ \bar{c}_1(e)-\bar{c}_1(f)\mid e\in R\setminus{X},\ f\in X\setminus{R}\colon X-f+e\in \basis_1\},
\]
\[
\delta_2:=\min
\{ \bar{c}_2(g)-\bar{c}_2(f)\mid g\not\in Y \cup R ,\ f\in Y\cap R\colon Y-g+f\in \basis_2\}.
\]

It is possible that the sets over which the minima are calculated are empty.
In these cases we define the corresponding minimum to be $\infty$. In the special
case where $\matroid_{1} = \matroid_{2}$ this case cannot occur.

Since neither a red, nor a blue arc goes from $R$ to $E\setminus{R}$, we know that
both, $\delta_1$ and $\delta_2$, are strictly positive, so that
$\delta:=\min\{\delta_1, \delta_2\}>0$.
Now, update
\[
\alpha'_e=
\begin{cases}
\alpha_e+\delta &\mbox{if }e\in R\\
\alpha_e &\mbox{else.}
\end{cases}
\quad \mbox{ and }\quad
\beta'_e=
\begin{cases}
\beta_e &\mbox{if }e\in R\\
\beta_e +\delta &\mbox{else.}
\end{cases}
\]

\begin{lem}
$(X,Y,\alpha', \beta')$ satisfies the optimality conditions for $\lambda'=\lambda+\delta$.
\end{lem}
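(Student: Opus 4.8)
The plan is to verify the three conditions of Theorem~\ref{thm:suffpairopt} for the updated tuple $(X,Y,\alpha',\beta',\lambda')$ with $\lambda' = \lambda+\delta$. Condition (iii) is immediate: if $e\in R$ then $\alpha'_e+\beta'_e = (\alpha_e+\delta)+\beta_e = \lambda+\delta$, and if $e\notin R$ then $\alpha'_e+\beta'_e = \alpha_e+(\beta_e+\delta) = \lambda+\delta$, so $\alpha'_e+\beta'_e = \lambda'$ for all $e$. For condition (ii), recall that $Y\setminus X\subseteq R$, so for $e\in Y\setminus X$ we have $\beta'_e = \beta_e = 0$ (using that $e\in R$ keeps $\beta$ unchanged and $\beta_e=0$ held before); and $(X\setminus Y)\cap R=\emptyset$, so for $e\in X\setminus Y$ we have $\alpha'_e = \alpha_e = 0$. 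We should also check nonnegativity: $\alpha',\beta'\ge 0$ holds since we only ever add the positive quantity $\delta$.

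The substantive part is condition (i): $X$ must remain a min-cost basis for $c_1-\alpha'$ and $Y$ a min-cost basis for $c_2-\beta'$. I would use the standard matroid basis-optimality criterion: a basis $B$ is min-cost for a weight function $w$ iff for every $e\notin B$ and every $f\in B$ with $B-f+e$ a basis, $w(e)\ge w(f)$. For $X$ and the new residual cost $\bar c_1'(e) := c_1(e)-\alpha'_e$, consider any swap pair $e\notin X$, $f\in X$ with $X-f+e\in\basis_1$. Split into cases by how $e,f$ relate to $R$. If both are in $R$ or both outside $R$, then $\bar c_1'(e)-\bar c_1'(f) = \bar c_1(e)-\bar c_1(f)\ge 0$ by optimality of $X$ for $\bar c_1$ (the $\delta$-shifts cancel). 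If $f\in R$ and $e\notin R$, then $\alpha'$ increased at $f$ but not at $e$, so $\bar c_1'(e)-\bar c_1'(f) = (\bar c_1(e)-\bar c_1(f)) + \delta > 0$. The only dangerous case is $e\in R\setminus X$, $f\in X\setminus R$: here $\bar c_1'(e)-\bar c_1'(f) = (\bar c_1(e)-\bar c_1(f)) - \delta$, and this is exactly why $\delta_1$ was defined as the minimum of $\bar c_1(e)-\bar c_1(f)$ over precisely these pairs; by the choice $\delta\le\delta_1$ the quantity stays $\ge 0$. The argument for $Y$ and $\bar c_2'$ is symmetric, with the roles of $R$ and $E\setminus R$ interchanged (since $\beta$ increases \emph{outside} $R$), and the critical case $g\notin Y\cup R$, $f\in Y\cap R$ is controlled by $\delta\le\delta_2$.

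The main obstacle, and the only place requiring care, is making the case analysis for condition (i) exhaustive and correctly matching the two critical cases to the definitions of $\delta_1$ and $\delta_2$ — in particular tracking which of $\alpha,\beta$ gets shifted on $R$ versus $E\setminus R$, since $\alpha$ increases on $R$ while $\beta$ increases off $R$. One should also note in passing that the criterion "no improving swap" genuinely characterizes min-cost bases in a matroid, so that checking all single swaps suffices; this is standard. Everything else is bookkeeping.
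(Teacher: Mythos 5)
Your proposal is correct and follows essentially the same route as the paper: conditions (ii) and (iii) are verified by direct bookkeeping using $Y\setminus X\subseteq R$ and $(X\setminus Y)\cap R=\emptyset$, and condition (i) reduces to the single critical swap case ($e\in R\setminus X$, $f\in X\setminus R$ for $X$, and symmetrically for $Y$), which is exactly what the definitions of $\delta_1$ and $\delta_2$ control. The only cosmetic difference is that the paper argues condition (i) by contradiction while you do an explicit exhaustive case split; the content is identical.
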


\begin{proof}
By construction, we have for each $e\in E$
\begin{itemize}
\item $\alpha_e'+\beta_e'=\alpha_e+\beta_e+\delta = \lambda+\delta=\lambda'$.
\item $\alpha_e'=0$ for $e\in X\setminus{Y}$, since $\alpha_e=0$ and $e \notin R$ (as $(X\setminus{Y})\cap R=\emptyset$).
\item 
$\beta'_e=0$ for $e \in Y \setminus{X}$, since $\beta_e=0$ and $(Y\setminus{X})\subseteq R$.
\end{itemize}

Moreover, by construction and choice of $\delta$, we observe that $X$ and $Y$ are optimal for $c_1-\alpha'$ and $c_2-\beta'$, since
\begin{itemize}
\item
 $c_1(e)-\alpha'_e\ge c_1(f)-\alpha'_f$ whenever $X'-f+e \in \basis_1$, 
\item $c_2(g)-\beta'_g\ge c_2(f)-\beta'_f$ whenever $Y'-f+g \in \basis_2$.
\end{itemize}
To see this, suppose for the sake of contradiction that
 $c_1(e)-\alpha'_e < c_1(f)-\alpha'_f$ for some pair $\{e,f\}$ with $e\not\in X$,  $f\in X$ and $X-f+e \in \basis_1$. 
Then $e\in R$,   $f\not \in R$, 
$\alpha'_e =\alpha_e-\delta$, and $\alpha'_f=\alpha_f,$ implying
$\delta> c_1(e) -\alpha_e- c_1(f) + \alpha_e =\bar{c}_1(e)-\bar{c}_{1}(f),$
in contradiction to our choice of $\delta$.
Similarly, it can be shown that $Y$ is optimal w.r.t.\ $c_2-\beta'$.
Thus, $(X,Y,\alpha',\beta')$ satisfies the optimality conditions for $\lambda'=\lambda+\delta$.
\end{proof}

J. Edmonds proved the following feasibility condition for the non-weighted matroid intersection problem.
\begin{lem}[Edmonds~\cite{edmonds1970submodular}]
    Consider the digraph $\tilde{D} = D(X,Y, 0, 0)$ for cost functions $c_1 = c_2 = 0$ (non-weighted case). If there exists no augmenting path in $\tilde{D}$ then $|X \cap Y|$ is maximum among all $X \in \basis_1, Y \in \basis_2$.
\end{lem}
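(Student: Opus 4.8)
The final statement to prove is Edmonds' feasibility condition for non-weighted matroid intersection:

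\medskip

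\textbf{Proof plan.} The plan is to recognize this as the classical matroid-intersection augmenting-path theorem of Edmonds and to prove it by a rank-function argument combined with a min-max (or "no augmenting path yields a certificate of optimality") type of reasoning. Since we are in the non-weighted case with $c_1=c_2=0$, every pair $(e,f)$ with $e\notin X$, $X-f+e\in\basis_1$ automatically satisfies the cost equality $c_1(e)-\alpha_e=c_1(f)-\alpha_f$ (both sides are $0$ with $\alpha=0$), so the red arcs of $\tilde D=D(X,Y,0,0)$ are exactly the arcs $(e,f)$ with $e\notin X$ and $X-f+e\in\basis_1$; analogously the blue arcs $(f,g)$ are those with $g\notin Y$ and $Y-f+g\in\basis_2$. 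This is precisely the exchange digraph used in the standard matroid-intersection algorithm. The statement is then exactly: if there is no alternating path from $Y\setminus X$ to $X\setminus Y$ in this digraph, then $|X\cap Y|$ is maximum over all $X\in\basis_1$, $Y\in\basis_2$.

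\medskip

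First I would let $R$ denote the set of elements reachable from $Y\setminus X$ by a red-blue alternating path in $\tilde D$ (the same set $R$ introduced in the dual-update step). The absence of an augmenting path means $X\setminus Y$ is disjoint from $R$; also $Y\setminus X\subseteq R$. The key structural claim is that no red arc and no blue arc leaves $R$, i.e.\ there is no red arc $(e,f)$ with $e\in R$, $f\notin R$, and no blue arc $(f,g)$ with $f\in R$, $g\notin R$ — this is immediate from the definition of $R$ as a reachability set. From "no red arc leaves $R$" one deduces, via the standard no-exchange-implies-span argument for a single matroid, that $r_1(R) = |X\cap R|$, where $r_1$ is the rank function of $\matroid_1$: indeed, if some $e\in R\setminus X$ had $X-f+e\in\basis_1$ for some $f\in X\setminus R$ this would be a red arc leaving $R$, so $X\cap R$ already spans $R$ in $\matroid_1$. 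Symmetrically, from "no blue arc leaves $R$" one gets $r_2(E\setminus R) = |Y\setminus R| = |Y\cap(E\setminus R)|$; here one uses the dual direction of the exchange, that every $g\in(E\setminus R)\setminus Y$ cannot be exchanged into $Y$ against an element of $Y\cap R$.

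\medskip

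Then I would invoke the matroid-intersection rank inequality: for any $X'\in\inds_1$, $Y'\in\inds_2$ and any $A\subseteq E$,
\[
|X'\cap Y'| \;=\; |X'\cap Y'\cap A| + |X'\cap Y'\cap(E\setminus A)| \;\le\; r_1(A) + r_2(E\setminus A).
\]
Applying this with $A = R$ gives $|X'\cap Y'|\le r_1(R)+r_2(E\setminus R) = |X\cap R| + |Y\cap(E\setminus R)|$. Finally I would check that the right-hand side equals $|X\cap Y|$: since $Y\setminus X\subseteq R$ we have $X\cap R \supseteq X\cap Y$ and more precisely $|X\cap R| = |X\cap Y\cap R| + |(X\setminus Y)\cap R| = |X\cap Y\cap R| + 0$ because $(X\setminus Y)\cap R=\emptyset$ (no augmenting path); so $|X\cap R| = |X\cap Y\cap R|$. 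Likewise $|Y\cap(E\setminus R)| = |X\cap Y\cap(E\setminus R)| + |(Y\setminus X)\cap(E\setminus R)| = |X\cap Y\cap(E\setminus R)|$ since $Y\setminus X\subseteq R$. Summing, $|X\cap R| + |Y\cap(E\setminus R)| = |X\cap Y|$, so $|X'\cap Y'|\le |X\cap Y|$ for every feasible $(X',Y')$, which is the claim.

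\medskip

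The main obstacle I expect is getting the two rank identities $r_1(R)=|X\cap R|$ and $r_2(E\setminus R)=|Y\cap(E\setminus R)|$ exactly right — in particular being careful that the relevant non-exchangeability really does translate into "$X\cap R$ spans $R$" and "$Y\cap(E\setminus R)$ spans $E\setminus R$", which requires the one-step exchange characterization of spanning (for a single matroid, if $X\in\basis_1$ and no single swap brings an element of $R\setminus X$ in while removing an element of $X\setminus R$, then $X\cap R$ spans $R$). This is a standard but slightly delicate matroid fact; everything else is the routine submodular rank inequality plus bookkeeping on the partition of $X\cap Y$ across $R$ and $E\setminus R$. Since this lemma is attributed to Edmonds and is classical, in the paper it would suffice to cite it, but the argument above is the self-contained proof.
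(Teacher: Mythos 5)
Your proof is correct, but note that the paper does not actually prove this lemma: it is stated as a known result of Edmonds and dispatched with a citation, so there is no in-paper argument to compare against. What you supply is the standard self-contained certificate proof of the matroid-intersection min--max theorem, specialized to the base formulation: with $A=R$ (the set reachable from $Y\setminus X$), the identities $r_1(R)=|X\cap R|$ and $r_2(E\setminus R)=|Y\cap(E\setminus R)|$ together with $(X\setminus Y)\cap R=\emptyset$ and $Y\setminus X\subseteq R$ give $|X\cap Y|=r_1(R)+r_2(E\setminus R)$, which dominates $|X'\cap Y'|$ for every base pair since $X'\cap Y'$ is a common independent set. All steps check out. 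Two small points are worth making explicit. First, your assertion that ``no red or blue arc leaves $R$'' is immediate only once one observes that in $D(X,Y,0,0)$ every directed walk starting in $Y\setminus X$ is automatically red--blue alternating: red arcs go from $E\setminus X$ into $X$ and blue arcs from $Y$ out of $Y$, so each vertex outside $Y\setminus X$ admits outgoing arcs of at most one colour, and that colour alternates with the forced colour of its incoming arcs; hence reachability by alternating paths coincides with plain directed reachability and $R$ is closed under all outgoing arcs. Second, the deduction from ``no exchange $X-f+e$ with $f\notin R$'' to ``$X\cap R$ spans $R$'' is exactly the fundamental-circuit argument ($C(X,e)\setminus\{e\}\subseteq X\cap R$ forces $e\in\mathrm{cl}_1(X\cap R)$), which you correctly flag as the delicate step; it goes through, including for loops. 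So the proposal is a valid, complete proof of the cited lemma, at the cost of reproving a classical result the authors chose merely to invoke.
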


Based on this result we show the following feasibility condition for our problem.
\begin{lem}\label{lem:edmondsintersection}
    If $\delta = \infty$ and $|X \cap Y| < k$ the given instance is infeasible.
\end{lem}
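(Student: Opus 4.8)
The plan is to show the contrapositive: if the instance is feasible, i.e.\ there exist bases $X^* \in \basis_1$ and $Y^* \in \basis_2$ with $|X^* \cap Y^*| \ge k$, then whenever the primal update has terminated with $|X \cap Y| < k$ and no augmenting path, the step length $\delta$ must be finite. So suppose $\delta = \infty$, meaning both $\delta_1$ and $\delta_2$ are computed over empty sets: there is no $e \in R \setminus X$, $f \in X \setminus R$ with $X - f + e \in \basis_1$, and no $g \notin Y \cup R$, $f \in Y \cap R$ with $Y - g + f \in \basis_2$. The key observation is that these emptiness conditions, together with the definition of $R$ (closed under red and blue arcs, with $Y \setminus X \subseteq R$ and $(X \setminus Y) \cap R = \emptyset$), are exactly the structural conditions one needs to invoke Edmonds' feasibility lemma on an appropriate restriction of the ground set.

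Concretely, I would argue that $\delta_1 = \delta_2 = \infty$ implies that in the \emph{non-weighted} auxiliary digraph $\tilde D = D(X, Y, 0, 0)$ restricted to the vertex set $R \cup (X \setminus Y)$ — or, more cleanly, by contracting/deleting elements outside $R$ appropriately — there is still no augmenting path from $Y \setminus X$ to $X \setminus Y$, because any red arc $(e,f)$ with $f \in X$ leaving $R$ would witness $X - f + e \in \basis_1$ with $e \in R \setminus X$, $f \in X \setminus R$ (contradicting $\delta_1 = \infty$), and similarly for blue arcs and $\delta_2$. Hence no arc of $\tilde D$ leaves $R$ at all through an exchangeable pair, so $R$ is a "closed" region containing all of $Y \setminus X$ and none of $X \setminus Y$, and within it there is no augmenting path. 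Applying Edmonds' lemma (the version quoted just above) to the matroids restricted so that the reachable region is the whole relevant ground set, one concludes that $|X \cap Y|$ is already maximum over all choices of bases — more precisely, that the current pair realizes the maximum possible intersection size. Since $|X \cap Y| < k$, no feasible pair with intersection exactly $k$ (nor even $\ge k$) can exist, so the instance is infeasible.

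The technically delicate step — and the main obstacle — is making the reduction to Edmonds' lemma rigorous: Edmonds' lemma as stated is about the \emph{global} non-weighted digraph $D(X,Y,0,0)$ and the full ground set, whereas here $\delta = \infty$ only tells us something about arcs crossing the frontier of $R$. One must carefully check that "no exchangeable pair leaves $R$" is equivalent to saying that, after restricting matroid $\matroid_1$ to $R \cup (X\setminus Y)$ along the span structure (and $\matroid_2$ similarly), the pair $(X, Y)$ restricted to this sub-instance admits no augmenting path in the non-weighted sense, and that a max intersection there of size $|X\cap Y|$ forces a max intersection of the same size in the original instance (elements outside $R$ that are in $X \cap Y$ stay put, and no base pair can do better on the frontier by the exchange obstructions). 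I would handle this by a direct rank/submodularity argument: exhibit a set $Z \subseteq E$ with $\rank_{\matroid_1}(Z) + \rank_{\matroid_2}(E \setminus Z) = |X \cap Y|$, using $Z := R$ together with the no-exchange conditions to verify both rank equalities, and then invoke the min-max formula for maximum matroid intersection (which is the content of Edmonds' lemma) to conclude $\max_{X',Y'} |X' \cap Y'| = |X \cap Y| < k$.
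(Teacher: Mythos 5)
Your proposal is correct and follows essentially the same route as the paper: $\delta=\infty$ means that no red or blue arc of the \emph{unweighted} digraph $\tilde D=D(X,Y,0,0)$ leaves $R$, so the set reachable from $Y\setminus X$ in $\tilde D$ is still contained in $R$ and misses $X\setminus Y$, whence Edmonds' feasibility lemma gives that $|X\cap Y|$ is already maximum over all base pairs and the instance is infeasible. The ``technically delicate step'' you flag is not actually there: since $Y\setminus X\subseteq R$, $(X\setminus Y)\cap R=\emptyset$, and every $\tilde D$-arc with tail in $R$ has head in $R$ (a red arc leaving $R$ would put a pair into the $\delta_1$-minimization, a blue arc into the $\delta_2$-minimization), the non-existence of an augmenting path already holds in the \emph{global} unweighted digraph, so Edmonds' lemma applies directly and no restriction, contraction, or separate rank min--max certificate is needed.
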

\begin{proof}
    This follows by the fact that $\delta = \infty$ if and only if the set 
    $(X \setminus Y) \cap R = \emptyset$, even
    if we construct the graph $D'$ without the condition that for red edges $c_{1}(e) - \alpha_{e} = c_{1}(f) - \alpha_{f}$ and for blue edges $c_{2}(g) - \beta_{g} = c_{2}(f) - \beta_{f}$.

    Non existence of such a path implies infeasibility of the instance by Lemma~\ref{lem:edmondsintersection} 
\end{proof}

\begin{lem}
If $(X,Y,\alpha, \beta, \lambda)$ satisfies the optimality conditions and
$\delta < \infty$, a primal update can be performed
after at most  $|E|$ dual updates.
\end{lem}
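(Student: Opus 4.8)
The plan is to track the set $R$ of elements reachable from $Y\setminus X$ along red-blue alternating paths and to show that each dual update \emph{strictly} enlarges $R$ while never deleting an element from it. Since $\emptyset\neq Y\setminus X\subseteq R\subseteq E$ and since a primal update becomes available as soon as $R$ meets $X\setminus Y$ (an augmenting path then exists, so Lemma~\ref{lem:franklemma} applies), this monotone growth bounds the number of consecutive dual updates by $|E|$. Throughout, $R'$, $\alpha'$, $\beta'$ denote the objects obtained after one dual update with step length $\delta=\min\{\delta_1,\delta_2\}$.

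First I would show $R\subseteq R'$. The update decreases $\bar{c}_1(e)$ by exactly $\delta$ for every $e\in R$ and leaves it unchanged for $e\notin R$; dually it leaves $\bar{c}_2(e)$ unchanged on $R$ and decreases it by $\delta$ off $R$. Hence a red arc $(e,f)$ with $e,f\in R$ keeps its defining equality $\bar{c}_1(e)=\bar{c}_1(f)$ (both sides drop by $\delta$), and a blue arc $(f,g)$ with $f,g\in R$ keeps $\bar{c}_2(g)=\bar{c}_2(f)$ (both sides are untouched); the remaining conditions $e\notin X$, $X-f+e\in\basis_1$, resp.\ $g\notin Y$, $Y-f+g\in\basis_2$, do not involve the multipliers. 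Every alternating path from $Y\setminus X$ lives entirely inside $R$, so all such paths survive the update and $R\subseteq R'$.

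Next I would show $R\subsetneq R'$. As $\delta<\infty$, one of $\delta_1,\delta_2$ equals $\delta$ and is attained by an actual pair. Say $\delta=\delta_1=\bar{c}_1(e^{*})-\bar{c}_1(f^{*})$ for some $e^{*}\in R\setminus X$, $f^{*}\in X\setminus R$ with $X-f^{*}+e^{*}\in\basis_1$. After the update $\bar{c}_1(e^{*})$ has dropped by $\delta$ while $\bar{c}_1(f^{*})$ is unchanged, so $\bar{c}_1(e^{*})=\bar{c}_1(f^{*})$ now holds and the digraph contains the new red arc $(e^{*},f^{*})$. Since the head of every red arc lies in $X$ and $e^{*}\notin X$, no alternating path reaches $e^{*}$ via a red arc; thus a path witnessing $e^{*}\in R$ is either trivial (so $e^{*}\in Y\setminus X$) or ends with a blue arc, and in both cases it extends through $(e^{*},f^{*})$. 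Hence $f^{*}\in R'$, and $f^{*}\notin R$, so $R\subsetneq R'$. The case $\delta=\delta_2$ is symmetric: the attaining pair $g^{*}\notin Y\cup R$, $f^{*}\in Y\cap R$ with $Y-f^{*}+g^{*}\in\basis_2$ creates a new blue arc $(f^{*},g^{*})$, and since the head of every blue arc lies outside $Y$ while $f^{*}\in Y$, the path to $f^{*}$ ends with a red arc or is trivial and thus extends through $(f^{*},g^{*})$, giving $g^{*}\in R'\setminus R$.

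Finally, I would assemble the bound. First, $X\setminus Y\neq\emptyset$: otherwise $X\subseteq Y$, hence $|X\cap Y|=|X|=\rank(\matroid_1)\geq K\geq k$, contradicting $|X\cap Y|<k$. By the two steps above, as long as no augmenting path exists each dual update produces a strictly larger reachable set while $R$ remains contained in $E$; therefore at most $|E|-1$ dual updates occur before $R=E$, at which point $R\supseteq X\setminus Y\neq\emptyset$ yields an augmenting path and hence a primal update. (Should $\delta=\infty$ arise beforehand, Lemma~\ref{lem:edmondsintersection} certifies infeasibility and the algorithm stops instead.) So at most $|E|$ dual updates separate two primal updates. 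The main obstacle is the strict-growth step: one must verify that the newly created tight arc genuinely \emph{extends} an alternating path rather than being wasted on a vertex of the wrong parity, which is exactly where the structural facts ``red arcs point into $X$'' and ``blue arcs point out of $Y$'' are used; a minor additional point is checking, in the no-shrink step, that the update shifts the residual costs by a common constant on $R$ (resp.\ on $E\setminus R$) so that no previously tight arc inside $R$ is destroyed.
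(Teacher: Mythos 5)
Your proof is correct and follows exactly the paper's (one-line) argument: each dual update strictly enlarges the set $R$ of reachable vertices, so at most $|E|$ dual updates can occur before an augmenting path appears. You supply the details the paper leaves implicit — that tight arcs inside $R$ survive the uniform shift of the residual costs, and that the newly tightened exchange pair yields an arc of the correct color/parity to extend an alternating path — and these details check out.
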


\begin{proof}
With each dual update, at least one more vertex enters the set $R'$ of reachable elements
in digraph $D'=D(X,Y,\alpha', \beta')$. 
\end{proof}

\paragraph{The primal-dual algorithm.} Summarizing, we obtain the following
algorithm.

\noindent
Input: $\matroid_1=(E,\basis_1)$, $\matroid_2=(E,\basis_2)$, $c_1, c_2:E\to \mathbb{R}$, $k\in \mathbb{N}$\\
Output: Optimal solution $(X,Y)$ of $(P_{=k})$
\begin{enumerate}
\item Compute an optimal solution $(X,Y)$ of $\min\{c_1(X)+c_2(Y)\mid X\in \basis_1, Y\in \basis_2\}$.
\item If $|X\cap Y|=k$, return $(X,Y)$ as optimal solution of $(P_{=k})$.
\item Else, if $|X\cap Y|>k$, run algorithm on  $\matroid_1$, $\matroid^*_2$, $c_1$, $c_2^*:=-c_2$, and $k^*:=\rank(\matroid_1)-k$.
\item Else, set $\lambda:=0$, $\alpha := 0, \beta := 0$.
\item While $|X\cap Y|<k$, do:

\begin{itemize}
\item  Construct auxiliary digraph $D$ based on $(X,Y,\lambda, \alpha, \beta)$ .
\item If there exists an augmenting path $P$ in $D$, update primal
$$X':=X\oplus P, \quad Y':=Y\oplus P.$$
\item Else, compute step length $\delta$ as described above.

    If $\delta = \infty$, terminate with the message "infeasible instance".
    
    Else, set $\lambda:=\lambda+\delta$ and update dual:

\[
\alpha_e :=
\begin{cases}
\alpha_e +\delta & \mbox{ if  $e$ reachable,}\\
\alpha_e & \mbox{ otherwise.}
\end{cases}
\quad
\beta_e :=
\begin{cases}
\beta_e  & \mbox{ if  $e$ reachable,}\\
\beta_e +\delta & \mbox{ otherwise.}
\end{cases}
\]

\item Iterate with $(X,Y,\lambda, \alpha, \beta)$
\end{itemize}
\item Return $(X,Y)$.
\end{enumerate}

As a consequence of our considerations, the following theorem follows.
\begin{theorem}\label{t.1}
The algorithm above solves $(P_{=k})$ using at most $k \times |E|$ primal or dual augmentations.
Moreover, the entire sequence of optimal solutions $(X_k,Y_k)$ for all $(P_{=k})$ with $k=0,1,\dots,K$  can be computed within $|E|^{2}$ primal or dual augmentations.
\end{theorem}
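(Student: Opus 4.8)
The plan is to establish the two claimed bounds separately, both by tracking the progress measure that each iteration of the while-loop makes. The key invariant, already verified in the lemmas above, is that after step 4 the algorithm always holds a tuple $(X,Y,\alpha,\beta,\lambda)$ satisfying the optimality conditions of Theorem~\ref{thm:suffpairopt} for the current $\lambda$, and that $(X,Y)$ is optimal for $(P_{=k'})$ where $k'=|X\cap Y|$. Combined with the observation from the figure discussion — that a base pair meeting $\val(\lambda)$ with $|X\cap Y|=k$ solves $(P_{=k})$ — correctness of the output is immediate once we show the loop terminates with $|X\cap Y|=k$ (or correctly reports infeasibility, by Lemma~\ref{lem:edmondsintersection}). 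So the real content is the iteration count.

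\textbf{First bound.} Each pass through the while-loop body either performs a primal update, which by Lemma~\ref{lem:franklemma} increases $|X\cap Y|$ by exactly one, or performs a dual update. By the last lemma, between two consecutive primal updates there are at most $|E|$ dual updates, since each dual update strictly enlarges the reachable set $R$ (which is a subset of $E$) and once $R$ meets $X\setminus Y$ an augmenting path exists. Starting from $|X\cap Y|=k'\ge 0$ and ending at $|X\cap Y|=k$, there are at most $k$ primal updates, hence at most $k$ blocks of at most $|E|$ dual updates each, giving at most $k\cdot|E|$ primal-or-dual augmentations in total. The case $|\bar X\cap\bar Y|>k$ handled in step~3 is symmetric via the dual-matroid reduction, with $k^*=\rank(\matroid_1)-k\le K$, so the same bound holds there as well (one should remark that $k$ in the bound may be replaced by $\min\{k,k^*\}\le K$, but $k\cdot|E|$ suffices as stated).

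\textbf{Second bound.} For the ``entire sequence'' claim I would invoke the remark made right after the sketch of the algorithm: running the algorithm once for $k=0$ and once for $k=K$ produces all the $(X_j,Y_j)$ along the way. Concretely, the run started from the unconstrained optimum $(\bar X,\bar Y)$ and pushed up to $k=K$ passes, as $|X\cap Y|$ climbs from $|\bar X\cap\bar Y|$ to $K$, through intermediate values each of which is (by the maintained invariant) an optimal solution of the corresponding $(P_{=j})$; this costs at most $K\cdot|E|\le|E|^2$ augmentations. The companion run for $k=0$ (equivalently, for $k^*=\rank(\matroid_1)$ on the dual side) fills in the values $j<|\bar X\cap\bar Y|$, again within $|E|^2$ augmentations — and in fact the two partial runs together cover $\{0,\dots,K\}$ with a combined count of at most $|E|^2$, since the two climbs have total length $K$. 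I would phrase the final tally as: the indices covered by the up-run have total augmentation cost $(K-|\bar X\cap\bar Y|)\cdot|E|$ and those covered by the down-run cost $|\bar X\cap\bar Y|\cdot|E|$ (reading the dual side back through the $|X\cap(E\setminus Y)|=|X|-|X\cap Y|$ identity), summing to at most $K\cdot|E|\le|E|^2$.

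\textbf{The main obstacle} I anticipate is not any single step but making the bookkeeping of the second claim airtight: one must argue that the invariant ``current $(X,Y)$ is optimal for $(P_{=|X\cap Y|})$'' genuinely holds at \emph{every} primal update and not merely at the end, so that the intermediate pairs can be harvested for free; and one must check that the dual-matroid reparametrization in step~3 lines up the sequence of intersection sizes correctly (via $k\mapsto\rank(\matroid_1)-k$) so that the two runs really do tile $\{0,1,\dots,K\}$ without a gap and without needing a third run. Both points follow from the construction and Lemma~\ref{lem:franklemma} together with the reduction identities~(i)--(ii) established in Section~\ref{sec:reduction}, but they deserve an explicit sentence rather than being left implicit.
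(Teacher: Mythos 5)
Your proposal is correct and follows essentially the same route as the paper: the paper's own proof is a single sentence invoking the two runs for $k=0$ and $k=K$, with the iteration count and the harvesting of intermediate optima left implicit in the preceding lemmas (the primal update increases $|X\cap Y|$ by one while preserving the optimality conditions, and at most $|E|$ dual updates separate consecutive primal updates). Your write-up simply makes explicit the bookkeeping that the paper treats as "a consequence of our considerations," including the point that the invariant holds at every primal update so the intermediate pairs $(X_j,Y_j)$ come for free.
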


\begin{proof}
    By running the algorithm for $k=0$ and $k=K:=\min\{\rank(\matroid_1), \rank(\matroid_2)\}$ we obtain optimal bases $(X_k,Y_k)$ for $(P_{=k})$ for all $k=1,2,\dots,K$ within $|E|^{2}$ primal or dual augmentations.
\end{proof}

\section{The recoverable robust matroid basis problem -- an application.}\label{sec.RecRob}

There is a strong connection between the model described in this paper and the
recoverable robust matroid basis problem (RecRobMatroid) studied in
\cite{busing2011phd}, \cite{hradovich2017recoverable-MST}, and \cite{hradovich2017recoverable}.
In RecRobMatroid, we are given a matroid $\matroid = (E, \basis)$ on a ground set $E$ with base set $\basis$, some integer $k \in \nn$, a first stage cost function $c_{1}$ and an uncertainty set $\mathcal{U}$ that contains different scenarios $S$, where each scenario $S \in \mathcal{U}$ gives a possible second stage cost $S=(c_S(e))_{e \in E}$.

RecRobMatroid then consists out of two phases: in the first stage, one needs to
pick a basis $X \in \basis$. 
Then, after the actual scenario $S \in \mathcal{U}$ is revealed, there is a second "recovery" stage, where
a second basis $Y$ is picked with the goal to minimize the worst-case cost $c_{1}(X) + c_{S}(Y)$ under the constraint that $Y$ differs in 
at most $k$ elements from the original basis $X$. That is, we require that $Y$ satisfies
$|X\Delta Y| \le k$ or, equivalently, that $|X \cap Y| \geq rk(\matroid) - k$. Here, as usual, $X\Delta Y=X\setminus{Y}\cup Y\setminus{X}$.
The recoverable robust matroid basis problem can be written as follows:

\begin{equation} \label{eq:recrobmatroid}
\min_{X \in \basis} \left( c_1(X) + \max_{S \in \mathcal{U}} \min_{\substack{Y \in \basis \\ |X \cap Y| \geq rk(\matroid) - k} } c_S(Y) \right). 
\end{equation}

There are several ways in which the uncertainty set $\mathcal{U}$  can be represented. One popular way is the \emph{interval uncertainty representation}. In this representation, we are given functions $c': E \rightarrow \rr$, $d: E \rightarrow \rr_+$ and assume that the uncertainty set $\mathcal{U}$ can be represented by a set of $|E|$ intervals:
\[
\mathcal{U} = \left\{ S = (c_S(e))_{e \in E} \mid c_S \in [c'(e), c'(e) + d(e)], e \in E \right\}.
\]
In the worst-case scenario $\bar{S}$ we have for all $e \in E$ that $c_{\bar{S}}(e) =c'(e) + d(e)$. When we define $c_2(e):= c_{\bar{S}}(e)$, it is clear that the RecRobMatroid problem under interval uncertainty represenation (RecRobMatroid-Int, for short) is a special case of $(P_{\ge})$, in which $\basis_1=\basis_2$.

B\"using~\cite{busing2011phd} presented an algorithm for RecRobMatroid-Int  which is exponential in $k$.
In 2017, Hradovich, Kaperski, and Zieli\'{n}ski~\cite{hradovich2017recoverable} proved that RecRobMatroid-Int can be solved in polynomial time via some iterative relaxation algorithm and asked for a strongly polynomial time algorithm. Shortly after, the same authors  presented
in \cite{hradovich2017recoverable-MST} a strongly polynomial time primal-dual algorithm for the special case of RecRobMatroid-Int on a graphical matroid. The question whether a strongly polynomial time algorithm for RecRobMatroid-Int on general matroids exists remained open. 

Furthermore, Hradovich, Kaperski, and Zieli\'{n}ski  showed that when uncertainty
set $\mathcal{U}$ is represented by either budget constraints, or if there is a
bound on the number of elements where scenario costs differ from first stage
costs, the optimal solution to $(P_{\le k})$ is an approximate solution for the problems with these alternative uncertainty sets. These results directly generalize to our model.

\paragraph{Two alternative variants of RecRobMatroid-Int.}
Let us consider two generalizations or variants of RecRobMatroid-Int.
First, instead of setting a bound on the size of the symmetric difference $|X \triangle Y|$ of two bases $X,Y$, alternatively, one could set a penalty on the size of the recovery. Let $C: \nn \rightarrow \rr$ be a 
penalty function which determines the penalty that needs to be paid as a function dependent on the size of the symmetric difference $|X \triangle Y|$. This leads to the following problem, which we denote by
 $(P^{\triangle})$. 
 \begin{align*}
    \min\ & c_{1}(X) + c_{2}(Y) + C(|X \triangle Y|)\\
    \text{s.t. } 
    & X,Y \in \mathcal{B}
\end{align*}
Clearly, $(P^{\triangle})$ is equivalent to RecRobMatroid-Int if $C(|X\triangle Y|)$ is equal to zero as long as $|X\triangle Y|\le k$, and $C(|X\triangle Y|)=\infty$ otherwise. 
As it turns out, our primal-dual algorithm for solving $(P_{=k})$ can be used to efficiently solve $(P^{\triangle})$.

\begin{cor}
Problem $(P^{\triangle})$ can be solved in strongly-polynomial time.
\end{cor}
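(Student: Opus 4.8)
The key observation is that, when $X$ and $Y$ are bases of the \emph{same} matroid $\matroid=(E,\basis)$, the size of the symmetric difference is not a free parameter: since every basis has cardinality $\rank(\matroid)$, we have
\[
|X\triangle Y| \;=\; |X|+|Y|-2|X\cap Y| \;=\; 2\bigl(\rank(\matroid)-|X\cap Y|\bigr).
\]
Thus $|X\triangle Y|$ depends only on $k:=|X\cap Y|$, it is always even, and $k$ ranges over $\{0,1,\dots,\rank(\matroid)\}$. In particular, once the value $k$ of the intersection is fixed, the penalty term $C(|X\triangle Y|)=C\bigl(2(\rank(\matroid)-k)\bigr)$ becomes a constant.

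Hence $(P^{\triangle})$ splits into $\rank(\matroid)+1$ subproblems, one for each admissible value $k$ of the intersection size: among base pairs $(X,Y)$ with $|X\cap Y|=k$ the minimum objective value of $(P^{\triangle})$ equals $\opt(P_{=k})+C\bigl(2(\rank(\matroid)-k)\bigr)$, where $\opt(P_{=k})$ is the optimum of $(P_{=k})$ and the whole expression is to be read as $+\infty$ for those $k$ for which $(P_{=k})$ is infeasible (which our algorithm recognizes via $\delta=\infty$). Therefore an optimal solution of $(P^{\triangle})$ is obtained by taking, over all $k\in\{0,\dots,\rank(\matroid)\}$, the base pair $(X_k,Y_k)$ that minimizes $c_1(X_k)+c_2(Y_k)+C\bigl(2(\rank(\matroid)-k)\bigr)$.

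To carry this out in strongly polynomial time, I would first invoke Theorem~\ref{t.1}: running the primal--dual algorithm of Section~\ref{sec:equality} for $k=0$ and for $k=K=\rank(\matroid)$ produces optimal solutions $(X_k,Y_k)$ for \emph{all} $k\in\{0,1,\dots,\rank(\matroid)\}$ within $|E|^{2}$ primal or dual augmentations, each of which runs in strongly polynomial time. It then remains to evaluate $C$ at the at most $\rank(\matroid)+1\le|E|+1$ points $2(\rank(\matroid)-k)$, to add these constants to the already computed values $c_1(X_k)+c_2(Y_k)$, and to return the pair attaining the smallest total. Assuming, as is standard, that $C$ is accessed through an evaluation oracle, this post-processing needs $O(|E|)$ oracle calls and arithmetic operations, so the whole procedure is strongly polynomial.

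The argument has essentially no hard step once the identity $|X\triangle Y|=2(\rank(\matroid)-|X\cap Y|)$ is noticed; the only points that need a word of care are that (i) $(P_{=k})$ may be infeasible for some $k$, in which case that value is simply skipped in the final minimization, and (ii) strong polynomiality of the scheme presupposes that the penalty function $C$ can be evaluated in (strongly) polynomial time.
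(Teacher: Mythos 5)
Your proposal is correct and follows essentially the same route as the paper: both invoke Theorem~\ref{t.1} to obtain optimal pairs $(X_k,Y_k)$ for all $k\in\{0,\dots,K\}$ in two runs of the primal--dual algorithm, and then minimize $c_1(X_k)+c_2(Y_k)+C(k^{\triangle})$ over $k$, using the identity $|X\triangle Y|=\rank(\matroid_1)+\rank(\matroid_2)-2|X\cap Y|$ (which reduces to your $2(\rank(\matroid)-k)$ since $\basis_1=\basis_2$ here). Your additional remarks on infeasible values of $k$ and on oracle access to $C$ are sensible refinements but do not change the argument.
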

\begin{proof}
By Theorem \ref{t.1}, optimal solutions $(X_k,Y_k)$ can be computed efficiently for all problems $(P_{=k})_{k\in\{0,1, \ldots, K\}}$  within 
$|E|^{2}$ primal or dual augmentations of the algorithm above.
    It follows that the optimal solution to $(P^{\triangle})$ is a minimizer of
    $$\min\{ c_1(X_k) + c_2 (Y_k) + C(k^{\triangle})\mid k \in \{0, \dots, K\} \},$$ 
    where $k^{\triangle} = \rank(\matroid_1) + \rank(\matroid_2) - 2k$.
\end{proof}

Yet another variant of  RecRobMatroid-Int  or the
more general problem $(P^{\triangle})$ would be to aim for the minimum
\emph{expected} second stage cost, instead of the minimum \emph{worst-case} second stage cost. 
Suppose, with respect to a given probability distribution per element $e\in E$, the expected second stage cost on element $e\in E$ 
is $\mathbb{E}(c_S(e))$.   By the linearity of expectation, to solve these problems, we could  simply solve problem $(P^{\triangle})$ with  $c_2(e):=\mathbb{E}(c_S(e))$.

\section{A generalization to polymatroid base polytopes}
 \label{sec:polymatroids}

 Recall that a function $f:2^E \rightarrow \mathbb{R}$ is called \emph{submodular} if $f(U)+f(V) \geq f(U \cup V) + f(U \cap V)$ for all $U, V \subseteq E$.
Function $f$ is called \emph{monotone} if $f(U) \leq f(V)$ for all $U \subseteq V$, and \emph{normalized} if $f(\emptyset) = 0$. Given a submodular, monotone and normalized function $f$, the pair $(E,f)$ is called a \emph{polymatroid}, and $f$ is called \emph{rank function} of the polymatroid $(E,f)$. 
The associated \emph{polymatroid base polytope} is defined as:
\[ \mathcal{B}(f) := \left\{  x \in \mathbb{\rr}_{+}^{|E|} \mid x(U) \leq f(U) 
\; \forall U \subseteq E, \; x(E) = f(E)\right\},\]
where, as usual,  $x(U) := \sum_{e \in U} x_e$ for all $U\subseteq E$. We refer to the book
"Submodular Functions and
    Optimization" by Fujishige~\cite{fujishige2005submodular} for details on polymatroids and polymatroidal flows as refered to below.

\begin{rem}
We note that all of the arguments presented in this section work also
     for the more general setting of
    \emph{submodular systems} (cf.\ \cite{fujishige2005submodular}), which are defined on arbitrary distributive lattices instead of the Boolean lattice $(2^{E}, \subseteq, \cap, \cup)$. 
    \end{rem}

Whenever $f$ is a submodular function on ground set $E$ with $f(U) \in \nn$ for all $U \subset E$, we call pair $(E,f)$ an \emph{integer polymatroid}.    
Polymatroids generalize matroids in the following sense: if the polymatroid rank function $f$ is integer and additionally satisfies the unit-increase property
$$f(S\cup\{e\})\le f(S)+1\quad \forall S\subseteq E,\ e\in E,$$
then the vertices of the associated polymatroid base polytope $\mathcal{B}(f)$ are exactly the incidence vectors of a matroid
$(E,\mathcal{B})$ with
$\mathcal{B}:=\{B\subseteq E \mid f(B)=f(E)\}$.
Conversely, the rank function $\rank: 2^E\to \mathbb{R}$ which assigns to every
subset $U\subseteq E$ the maximum cardinality $\rank(U)$ of an independent set within $U$ is a polymatroid rank function satisfying the unit-increase property. In particular, bases of a polymatroid base polytope are not necessarily $0-1$ vectors anymore. Generalizing set-theoretic intersection and union from sets (a.k.a.\ $0-1$ vectors) to arbitrary vectors can be done via the following binary operations, called meet and join: given two vectors
$x,y\in \mathbb{R}^{|E|}$ the meet of $x$ and $y$ is
$x\wedge y:=(\min\{x_e,y_e\})_{e\in E}$, and the join of $x$ and $y$ is
$x\vee y:=(\max\{x_e,y_e\})_{e\in E}$. Instead of the size of the intersection, we now talk about the size of the meet, abbreviated by
$$|x\wedge y|:=\sum_{e\in E} \min\{x_e, y_e\}.$$
Similarly, the size of the join is $|x\vee y|:=\sum_{e\in E}\max\{x_e, y_e\}.$
Note that $|x|+|y|=|x\wedge  y|+|x\vee y|$,
where, as usual, for any $x\in \mathbb{R}^{|E|}$, we abbreviate $|x|=\sum_{e\in E}x_e$.
 It follows that for any base pair $(x,y)\in \mathcal{B}(f_1)\times \mathcal{B}(f_2)$, we have 
$$|x|=f_1(E)=\sum_{e\in E\colon x_e>y_e} (x_e-y_e) -|x\wedge y| \text{ and }
|y|=f_2(E)=\sum_{e\in E\colon y_e>x_e} (y_e-x_e) -|x\wedge y|.$$ 
Therefore, it holds that  
$|x\wedge y| \ge k$ if and only if both, 
$\sum_{e\in E \colon x_e > y_e} (x_e-y_e) \le f_1(E)-k$ and 
$\sum_{e\in E \colon x_e < y_e} (y_e-x_e) \le f_2(E)-k$.
The problem described in the next paragraph can be seen as a direct generalization of problem $(P_{\ge k})$ when going from matroid bases  to more general polymatroid base polytopes.

\paragraph{The model.} Let $f_{1}, f_{2}$ be two polymatroid rank functions with
associated polymatroid base polytopes $\mathcal{B}(f_{1})$ and $\mathcal{B}(f_{2})$ defined on the same ground set of resources $E$, let $c_1, c_2:E\to \mathbb{R}$ be two cost functions on $E$, and let $k$ be some integer. The following problem, which we denote by
$(\hat{P}_{\ge k})$, is a direct generalization of $(P_{\ge k})$ from matroids to polymatroids.
\begin{align*}
    \min\ & \sum_{e \in E} c_{1}(e) x(e) + \sum_{e \in E}  c_{2}(e) y(e)\\
    \text{s.t. } 
    & x \in \mathcal{B}(f_{1})\\
    & y \in \mathcal{B}(f_{2})\\
    & |x\wedge y| \geq k\\
\end{align*}

Results obtained for $(P_{\ge k}), (P_{\le k})$ and $(P_{= k})$ directly give us pseudo-polynomial algorithms for $(\hat{P}_{\ge k}), (\hat{P}_{\le k})$ and $(\hat{P}_{= k})$.

\begin{cor}
 If $(E,f_1), (E,f_2)$ are two integer polymatroids, problems $(\hat{P}_{\ge k}), (\hat{P}_{\le k})$ and $(\hat{P}_{= k})$ can be solved within pseudo-polynomial time.  
\end{cor}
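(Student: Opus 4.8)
The plan is the ``direct'' route suggested just before the statement: turn each integer polymatroid into a matroid on an enlarged ground set, carry the three problems over verbatim, and then invoke the strongly polynomial algorithms of Section~\ref{sec:reduction} for the $\le$/$\ge$ versions and of Theorem~\ref{t.1} for the $=$ version. The enlargement is the classical expansion of an integer polymatroid into a matroid: given $(E,f)$, replace each $e\in E$ by a block $E_e$ of $N_e$ parallel copies with $N_e\ge f(\{e\})$, set $\tilde E=\bigudot_{e\in E}E_e$, and let $\matroid(f)$ be the matroid on $\tilde E$ with rank function $\rho(A)=\min_{S\subseteq E}\bigl(f(S)+|A\setminus\bigcup_{e\in S}E_e|\bigr)$; it is standard (cf.\ \cite{fujishige2005submodular}) that this is a matroid and that $x\in\nnz^{E}$ lies in $\mathcal B(f)$ iff there is a basis $B$ of $\matroid(f)$ with $|B\cap E_e|=x_e$ for all $e$. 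Using the \emph{same} $\tilde E$ for $(E,f_1)$ and $(E,f_2)$, with block sizes $N_e:=\max\{f_1(\{e\}),f_2(\{e\})\}$, yields matroids $\matroid_1:=\matroid(f_1)$, $\matroid_2:=\matroid(f_2)$ on the common ground set $\tilde E$; lifting costs constantly over blocks ($\tilde c_i(e^j):=c_i(e)$) turns $\sum_e c_i(e)x(e)$ into $\tilde c_i(X)$ for the basis $X$ with counts $x$. Since $|\tilde E|=\sum_e N_e$ is at most $|E|$ times the largest value of $f_1,f_2$, the expanded instance has pseudo-polynomial size, so each of the matroid algorithms runs in time pseudo-polynomial in the original input.

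The substance is to show that $(\hat P_{\bowtie k})$ and the matroid problem $(P_{\bowtie k})$ on $(\matroid_1,\matroid_2,\tilde c_1,\tilde c_2)$ have the same optimal value for $\bowtie\in\{\ge,\le,=\}$. The first structural fact I would record is that permuting the copies inside a block is an automorphism of $\matroid(f)$; hence, once the counts $x=(|X\cap E_e|)_e$, $y=(|Y\cap E_e|)_e$ are fixed, the copies of $X$ and $Y$ inside each $E_e$ may be re-chosen to realize any per-block overlap in $[\max\{0,x_e+y_e-N_e\},\ \min\{x_e,y_e\}]$, and in particular $|X\cap Y|$ can be made equal to $|x\wedge y|=\sum_e\min\{x_e,y_e\}$. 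One inequality is then immediate in all three cases: take an optimum $(x^\ast,y^\ast)$ of $(\hat P_{\bowtie k})$, lift it to matroid bases whose copies are maximally overlapping in every block, so $|X^\ast\cap Y^\ast|=|x^\ast\wedge y^\ast|\bowtie k$; this gives a feasible solution of $(P_{\bowtie k})$ of equal cost, so $\opt(P_{\bowtie k})\le\opt(\hat P_{\bowtie k})$.

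For the reverse inequality, let $(X,Y)$ be optimal for $(P_{\bowtie k})$ with counts $(x,y)$; these counts form a feasible pair of polymatroid bases and $|x\wedge y|\ge|X\cap Y|$. If $\bowtie={\ge}$ this already gives $|x\wedge y|\ge k$, so $(x,y)$ is feasible for $(\hat P_{\ge k})$ and we are done. For $\bowtie\in\{\le,=\}$ the pair $(x,y)$ can fail the constraint, precisely when $|x\wedge y|>k=|X\cap Y|$. Here I would exploit that the block sizes were chosen \emph{tight}: a block $E_e$ in which $X,Y$ are not maximally overlapping must have $x_e<N_e$ and $y_e<N_e$ (if $X$ or $Y$ fills $E_e$ the overlap is forced to be $\min\{x_e,y_e\}$), so there is ``room'' in such a block; then one performs a polymatroid base exchange moving one unit of $x$ (or of $y$) out of an over‑overlapping block into one where it creates no new overlap, staying inside $\mathcal B(f_1)$ (resp.\ $\mathcal B(f_2)$), not increasing the cost, and decreasing $|x\wedge y|$ by one. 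Iterating until $|x\wedge y|=k$ produces a feasible solution of $(\hat P_{=k})$ (resp.\ $(\hat P_{\le k})$, noting $\opt(\hat P_{\le k})=\min_{j\le k}\opt(\hat P_{=j})$ and that the $(P_{=k})$-algorithm of Theorem~\ref{t.1} already computes all $j$) of cost at most $\tilde c_1(X)+\tilde c_2(Y)$. I expect this exchange step — picking a pair of blocks so that the move is simultaneously cost-non-increasing and base-polytope-preserving — to be the main obstacle; the complementation identities recorded in Section~\ref{sec:polymatroids} (e.g.\ $|x\wedge y|\le k$ iff $\sum_{e:x_e>y_e}(x_e-y_e)\ge f_1(E)-k$) plus the reduction $\opt(\hat P_{\le k})=\min_{j\le k}\opt(\hat P_{=j})$ let one get away with carrying it out only for the ``$=$'' reduction. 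Combining the two inequalities gives equality of the optima, and running the appropriate strongly polynomial matroid algorithm on the expanded instance proves the corollary; a worked two-element example (a single block of size~$2$) already shows why the tight choice of $N_e$, rather than a large one, is essential.
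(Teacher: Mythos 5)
Your overall route is the same as the paper's: expand each integer polymatroid into a matroid on $\sum_{e\in E} f(\{e\})$ parallel copies (Helgason's construction) and run the matroid algorithms of Sections~\ref{sec:reduction} and~\ref{sec:equality} on the pseudo-polynomially large instance --- the paper's own proof is exactly this one sentence and nothing more. For $(\hat{P}_{\ge k})$ your argument is complete and correct: for any representatives $|X\cap Y|\le |x\wedge y|$, maximal-overlap representatives achieve $|X\cap Y|=|x\wedge y|$, so the two feasible regions project onto one another and the optima coincide. Here you have actually supplied the justification that the paper omits.

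The gap is exactly where you predicted it, in the exchange step for $(\hat{P}_{\le k})$ and $(\hat{P}_{=k})$, and it is not repairable: the identity $\opt(P_{\bowtie k})=\opt(\hat{P}_{\bowtie k})$ that the exchange is meant to establish is simply false for $\bowtie\in\{\le,=\}$, even with the tight block sizes $N_e=\max\{f_1(\{e\}),f_2(\{e\})\}$. Take $E=\{1,2,3\}$ and $f_1=f_2=f$ with $f(S)=\min\{2|S|,3\}$. The integral bases of $\mathcal{B}(f)$ are $(1,1,1)$ and the permutations of $(2,1,0)$; every basis has support of size at least two, so no two bases have disjoint supports and $(\hat{P}_{\le 0})$, hence $(\hat{P}_{=0})$, is infeasible. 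The expansion with $N_e=f(\{e\})=2$ is the uniform matroid $U_{3,6}$, in which the disjoint bases $X=\{1a,2a,3a\}$ and $Y=\{1b,2b,3b\}$ (counts $x=y=(1,1,1)$, so $|x\wedge y|=3$) are feasible for $(P_{\le 0})$ and $(P_{=0})$ at cost $c_1(E)+c_2(E)$. So the expanded matroid problem can be feasible --- and in cost terms strictly cheaper --- where the polymatroid problem is not, and no cost-non-increasing, base-preserving unit exchange can fix the pair $\bigl((1,1,1),(1,1,1)\bigr)$, because no meet-$0$ pair exists at all. (This does not contradict the weak NP-hardness shown later in Section~\ref{sec:polymatroids}, which is compatible with pseudo-polynomial solvability; but it does mean that the expansion argument, which is also the paper's entire proof for the $\le$ and $=$ cases, establishes the corollary only for $(\hat{P}_{\ge k})$. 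The difficulty you isolated is genuine, and it lies in the reduction itself, not merely in your completion of it.)
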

\begin{proof}
    Each integer polymatroid can be written as a matroid on a pseudo-polynomial number of resources, namely on $\sum_{e \in E}f(\{e\})$ resources~\cite{helgason1974concept}. Hence, the strongly polynomial time algorithms we derived for problems $(P_{\ge k}), (P_{\le k})$ and $(P_{= k})$ can directly be applied, but now have a pseudo-polynomial running time. 
\end{proof}

In the following, we first show that $(\hat{P}_{\ge k})$ can be reduced to an instance of the
\emph{polymatroidal flow problem}, which is known to be computationally equivalent to a submodular flow problem and can thus be solved in strongly polynomial time.
Afterwards, we show that the two problems $(\hat{P}_{\le k})$ and $(\hat{P}_{= k})$, which can be obtained from $(\hat{P}_{\ge k})$ by replacing constraint
$|x\wedge y|\ge k$ by either $|x\wedge y|\le k$, or $|x\wedge y|= k$, respectively, are weakly NP-hard.

\subsection{Reduction of polymatroid base problem $(\hat{P}_{\ge k})$ to polymatroidal flows.}

The polymatroidal flow problem can be described as follows:
we are given a digraph $G=(V,A)$, arc costs $\gamma:A\to \mathbb{R}$, lower
bounds $l:A\to \mathbb{R}$, and two submodular functions $f^+_v$ and $f^-_v$ for
each vertex $v\in V.$ Function $f_v^+$ is defined on $2^{\delta^{+}(v)}$, the
set of subsets of the set $\delta^+(v)$ of $v$-leaving arcs, while $f_v^-$ is
defined on $2^{\delta^{-}(v)}$, the set of subsets of the set $\delta^-(v)$ of
$v$-entering arcs and 
\begin{align*}
    P(f_v^+) = \left\{x \in \rr^{\delta^{+}(v)} \colon x(U) \leq f_v^+(U) \; \forall
    U \subseteq \delta^{+}(v) \right\},\\
    P(f_v^-) = \left\{x \in \rr^{\delta^{-}(v)} \colon x(U) \leq f_v^-(U) \; \forall
    U \subseteq \delta^{-}(v) \right\}.
\end{align*}
Given a flow $\varphi:A\to \mathbb{R},$ the net-flow at $v$ is abbreviated by
$\partial \varphi(v):=\sum_{a\in \delta^-(v)} \varphi(a)-\sum_{a\in \delta^+(v)}
\varphi(a)$. For a set of arcs $S \subseteq A$, $\varphi|_{S}$ denotes the vector
$(\varphi(a))_{a \in S}$. The associated polymatroidal flow problem can now be formulated as follows.
\begin{align*}
    \min & \sum_{a \in A} \gamma(a) \varphi(a) &\\
    \text{s.t. } & l(a) \leq \varphi(a) & (a \in A)\\
    & \partial \varphi(v) = 0 & (v \in V)\\
    & \varphi|_{\delta^{+}(v)} \in P(f^{+}_{v}) & (v \in V)\\
    & \varphi|_{\delta^{-}(v)} \in P(f^{-}_{v}) & (v \in V)\\
\end{align*}
As described in Fujishige's book (see~\cite{fujishige2005submodular}, page 127f), the polymatroidal flow problem is computationally equivalent to submodular flows and can thus be solved in strongly polynomial time.

\begin{theorem}
    The Recoverable Polymatroid Basis Problem can be reduced to the
    Polymatroidal Flow Problem.
\end{theorem}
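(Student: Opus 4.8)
The plan is to give a polynomial-time construction turning an instance $(f_1,f_2,c_1,c_2,k)$ of $(\hat{P}_{\ge k})$ into an equivalent polymatroidal flow instance; write $N_i:=f_i(E)$. The starting point is the reformulation recorded above: for a base pair $(x,y)\in\mathcal{B}(f_1)\times\mathcal{B}(f_2)$ one has $|x\wedge y|\ge k$ if and only if there is a vector $z$ with $0\le z_e\le\min\{x_e,y_e\}$ for every $e\in E$ and $\sum_{e\in E}z_e\ge k$ (take $z=x\wedge y$ for the forward direction). I would model $x$, $y$ and this auxiliary $z$ as the three ``kinds'' of flow inside a single circulation, so that the polymatroid membership $x\in\mathcal{B}(f_1)$, $y\in\mathcal{B}(f_2)$ becomes a polymatroid flow constraint at a hub node, and the meet constraint becomes a capacity on one designated arc.

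Concretely, I would take a digraph with a source hub $r_1$, a sink hub $T_2$, an auxiliary node $N$, a second (unconstrained) source hub $r_2$, and a global source $s$ and target $t$ joined by a return arc $(t,s)$ to make everything a circulation; additionally, for every element $e\in E$ a two-node gadget $u_e,w_e$. The $x$-variables sit on the arcs $(r_1,u_e)$: these carry $x_e$, have cost $c_1(e)$, and the outgoing-arc polymatroid $f^+_{r_1}$ is set equal to $f_1$; together with a forced inflow $N_1$ into $r_1$ (a lower bound $N_1$ on $(s,r_1)$, matched from above by the polymatroid) this encodes exactly $x\in\mathcal{B}(f_1)$. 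Symmetrically, the $y$-variables sit on the arcs $(w_e,T_2)$, carrying $y_e$ at cost $c_2(e)$, with incoming-arc polymatroid $f^-_{T_2}=f_2$ and forced outflow $N_2$, encoding $y\in\mathcal{B}(f_2)$. Inside gadget $e$, the inflow $x_e$ at $u_e$ splits along a shared arc $(u_e,w_e)$ carrying $z_e$ and a private arc $(u_e,N)$ carrying $x_e-z_e$; the inflow at $w_e$ consists of $z_e$ (from $u_e$) plus $y_e-z_e$ delivered through an arc $(r_2,w_e)$, where $r_2$ is fed from $s$. Nonnegativity of the flow on $(u_e,N)$ and on $(r_2,w_e)$ forces $z_e\le x_e$ and $z_e\le y_e$; finally a one-element polymatroid capping the arc $(N,t)$ at $N_1-k$ (legitimate since $k\le\min\{N_1,N_2\}$) forces $\sum_e(x_e-z_e)\le N_1-k$, i.e.\ $\sum_e z_e\ge k$. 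All remaining arcs get zero cost and trivial (unbounded) polymatroids.

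With the network fixed, the equivalence is then a routine check of flow conservation in both directions: a feasible circulation yields $x\in\mathcal{B}(f_1)$, $y\in\mathcal{B}(f_2)$ and a $z$ with $0\le z\le x\wedge y$ and $\sum_e z_e\ge k$, hence $|x\wedge y|\ge k$, at flow cost exactly $\sum_e c_1(e)x_e+\sum_e c_2(e)y_e=c_1(x)+c_2(y)$; conversely, given a feasible $(x,y)$ for $(\hat{P}_{\ge k})$ one sets $z_e:=\min\{x_e,y_e\}$ and reads off a circulation of the same cost. Since the polymatroidal flow problem is computationally equivalent to submodular flow and hence solvable in strongly polynomial time, this reduction also yields the announced strongly polynomial algorithm for $(\hat{P}_{\ge k})$.

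The step I expect to be the main obstacle, and which the construction is really built around, is faithfully capturing the \emph{per-element} coupling $z_e\le\min\{x_e,y_e\}$ while enforcing $x\in\mathcal{B}(f_1)$ and $y\in\mathcal{B}(f_2)$ through only a single polymatroid constraint at each hub. Routing all the shared amounts $z_e$ through one common meet node would be fatal: flow does not remember its origin, so a unit of ``$z$'' entering from $u_e$ could leave towards $w_{e'}$, and one checks that with $x$ and $y$ supported on disjoint elements one could still push total shared flow $k$, producing a feasible circulation whose $(x,y)$ has $|x\wedge y|=0<k$. Keeping the arc $(u_e,w_e)$ strictly inside gadget $e$ — so $z_e$ is simultaneously subtracted from the $x_e$-branch and the $y_e$-branch — together with placing $f_1$, resp.\ $f_2$, only on arcs carrying the full $x_e$, resp.\ $y_e$, is exactly what rules this out; I expect the careful part of the write-up to be verifying that the remaining ``leakage'' nodes $r_1,r_2,N,T_2$ carry only aggregate constraints that are insensitive to any re-matching of flow units.
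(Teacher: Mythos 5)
Your reduction is correct and is essentially the paper's own construction: both encode $x\in\mathcal{B}(f_1)$ and $y\in\mathcal{B}(f_2)$ via polymatroid constraints at two hub vertices of a circulation network, realize the per-element meet $z_e\le\min\{x_e,y_e\}$ as flow on a dedicated arc kept inside the gadget for $e$, and enforce $|x\wedge y|\ge k$ by capping the aggregate non-shared flow at $f_1(E)-k$ (the paper also adds the symmetric cap $f_2(E)-k$). The only difference is cosmetic — the paper uses three parallel channels per element (an $X$-, $Y$- and $Z$-arc, the last with cost $c_1(e)+c_2(e)$) where you split $x_e$ in series into $z_e$ and $x_e-z_e$ — so the argument is the same.
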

\begin{proof}
    We create the instance $(G, \gamma, l, (f_{v}^{+})_{v \in V},
    (f_{v}^{-})_{v \in V})$ of the Polymatroid Flow Problem shown in
    Figure~\ref{fig:red-polyflow}. The graph $G$ consists of $6$ special
    vertices $s, u_{1}, u_{2}, v_{1}, v_{2}, t$ and of $12n$ additional vertices
    denoted by $u^{X}_{e}, v^{X}_{e}, u^{Y}_{e}, v^{Y}_{e}, u^{Z}_{e},
    v^{Z}_{e}$ for each $e \in E$. The arc set consists of arcs $(s,u_{1})$,
    $(s, u_{2})$, $(v_{1}, t)$, $(v_{2},t)$, $(t,s)$. In addition we have arcs
    $(u_{1}, u^{X}_{e}), (u_{1}, u^{Z}_{e})$ for each $e \in E$, $(u_{2},
    u^{Y}_{e})$ for each $e \in E$, $(v^{X}_{e}, v_{1})$ for each $e \in E$,
    $(v^{Z}, v_{2}), v^{Y}_{e}, v_{2})$ for each $e \in E$. In addition for each
    $e \in E$ we have three sets of arcs $(u^{X}_{e},
    v^{X}_{e})$, $(u^{Y}_{e}, v^{Y}_{e})$, $(u^{Z}_{e}, v^{Z}_{e})$ which we
    denote by $E_{X}, E_{Y}, E_{Z}$ respectively.  We set $\gamma((u^{X}_{e}, v^{X}_{e})) := c_{1}(e)$,
    $\gamma((u^{Y}_{e}, v^{Y}_{e})) := c_{2}(e)$, $\gamma((u^{Z}_{e}, v^{Z}_{e}))
    := c_{1}(e) + c_{2}(e)$ and $\gamma(a) := 0$ for all other arcs $a$.
    We enforce lower bounds on the flow on the two arcs by setting $l((s,u_{1})) :=
    f_{1}(E)$ and $l((v_{2}, t)) := f_{2}(E)$ and $l(a)=0$ for all other arcs $a$. To model upper bounds on the flow
    along the arcs $(v_{1}, t)$ and $(s,u_{2})$ we add the polymatroidal
    constraints on $\varphi|_{\delta^{+}}(v_{1})$ and
    $\varphi|_{\delta^{-}}(u_{2})$ and set $f_{v_{1}}^{+}((v_{1}, t)) := f_{1}(E)
    - k$ and $f_{u_{2}}^{-}( (s,u_{2}) ) := f_{2}(E) - k$.
    We also set 
    \begin{align*}
        f_{u_{1}}^{+}(S) := f_{1}(\{e \in E \colon (u_{1}, u^{X}_{e}) \in S
        \text{ or } (u_{1}, u^{Z}_{e}) \in S\}) \quad \forall S \subseteq
        \delta^{+}(u_{1}),\\
        f_{v_{2}}^{-}(S) := f_{2}(\{e \in E \colon (v^{Y}_{e}, v_{2}) \in S
        \text{ or } (v^{Z}_{e}, v_{2}) \in S\}) \quad \forall S \subseteq
        \delta^{-}(v_{2}).
    \end{align*}
    All other polymatroidal constraints are set to the trivial polymatroid,
    hence arbitrary in- and outflows are allowed.

    \begin{figure}[t]
      \centering
      \includegraphics[width=0.9\textwidth]{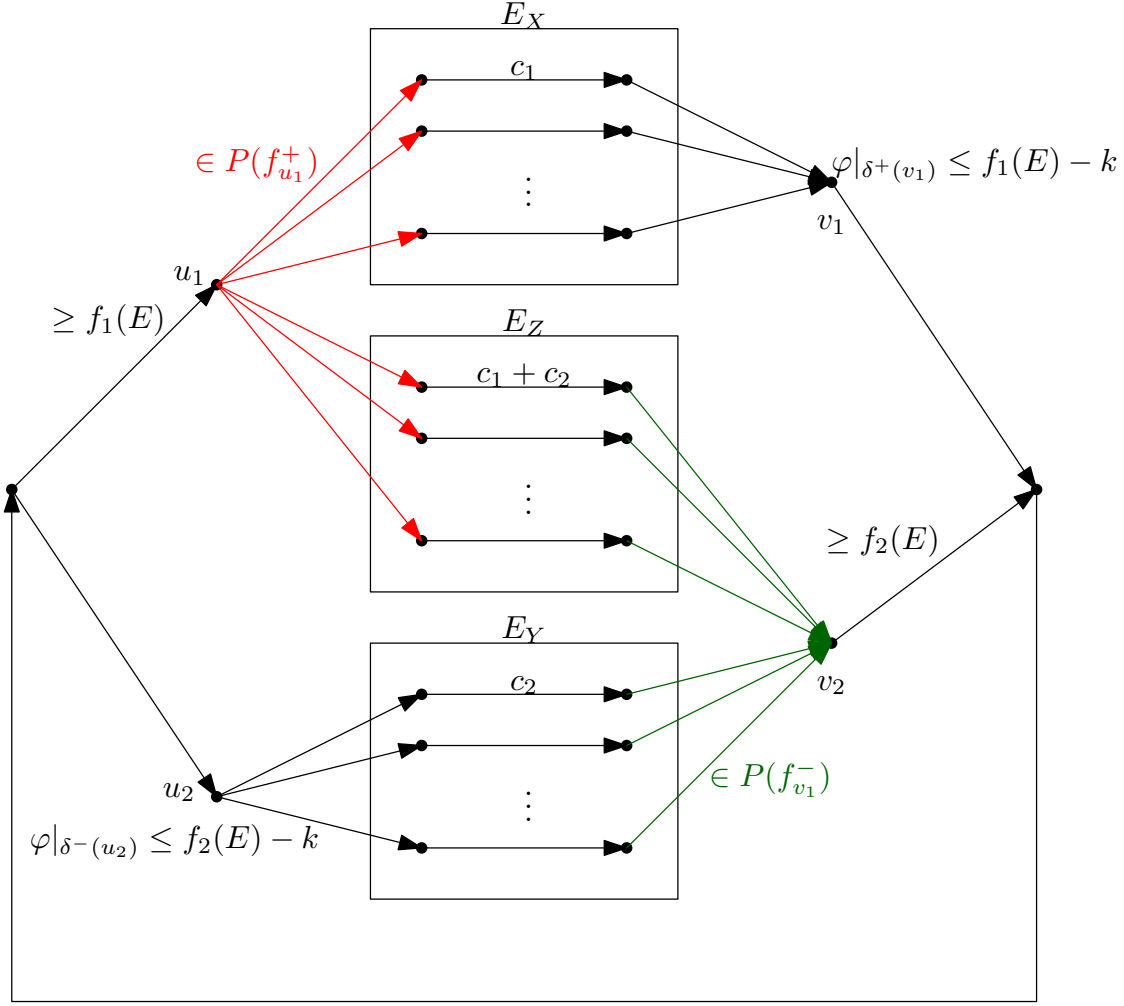}
      \caption{The Polymatroid Flow instance used to solve the Recoverable
      Polymatroid Basis Problem.}
      \label{fig:red-polyflow}
    \end{figure}

    We show that the constructed instance of the polymatroidal flow problem
    is equivalent to the given instance of the Polymatroidal Flow Problem.

    Consider the two designated vertices $u_1$ and $v_2$ such that
    $\delta^+(u_1)$ are the red arcs, and $\delta^-(v_2)$ are the green arcs in
    Figure~\ref{fig:red-polyflow}.
    Take any feasible polymatroidal flow $\varphi$ and let $\tilde{x} :=
    \varphi|_{\delta^{+}(u_{1})}$ denote the restriction of $\varphi$ to the red
    arcs, and $\tilde{y} := \varphi|_{\delta^{-}(v_{2})}$ denote the restriction of $\varphi$ to the green arcs. Note that there is a unique arc entering $u_1$ which we denote by $(s,u_1)$.
    Observe that the constraints $\varphi(s,u_1)\geq f_{1}(E)$ and
    $\varphi|_{\delta^{+}(u_1)}\in P({f}_{u_{1}}^{+})$ for the flow
    going into $E_{X}$ and $E_{Z}$ imply that the flow vector $\tilde{x}$ on the red
    arcs belongs to $ \basis({f}_{u_{1}}^{+})$. Analogously, the flow
    vector $\tilde{y}$ satisfies $\tilde{y} \in \basis({f}_{v_{2}}^{-})$. By
    setting $x(e) := \tilde{x}( (u_{1}, u_{e}^{X}) ) + \tilde{x}( (u_{1},
    u_{e}^{Z}) )$ and 
    $y(e) := \tilde{y}( (v_{e}^{Y}, v_{2}) ) + \tilde{y}( (v_{e}^{Z}, v_{2}) )$
    for each $e \in E$, 
    we have that the cost of the polymatroid flow can be rewritten as
    \[ \sum_{e \in E} c_{1}(e) x(e) + \sum_{e \in E}  c_{2}(e) y(e). \]
    The constraint $\varphi|_{\delta^{+}(u_2)}\le f_2(E)-k$ on the inflow into
    $E_{Y}$, and  the constraint $\varphi|_{\delta^{-}(v_1)}\le f_1(E)-k$ on the
    outflow out of $E_{X}$ are equivalent to $|x\wedge y| \geq k$. Hence, the equivalence follows.

    
\end{proof}

Note that $(\hat{P}_{\geq k})$ is computationally equivalent to
\begin{align*}
    \min\ & \sum_{e \in E} c_{1}(e) x(e) + \sum_{e \in E}  c_{2}(e) y(e)\\
    \text{s.t. } 
    & x \in \mathcal{B}(f_{1})\\
    & y \in \mathcal{B}(f_{2})\\
    & \|x - y\|_{1} \leq k'\\
\end{align*}
which we denote by $(\hat{P}_{\|\cdot\|_{1}})$, as of the direct connection 
$|x|+|y|=2 |x\wedge y| +\|x-y\|_1$
between $|x \wedge y|$, the size of the meet of $x$ and $y$, and the $1$-norm of $x-y$. It is an interesting open question whether this problem is also tractable if one replaces $\|x-y\|_{1} \leq k'$ by arbitrary norms or, specifically, the $2$-norm. We conjecture that methods based on convex optimization could work in this case, likely leading to a polynomial, but not strongly polynomial, running time.

\subsection{Hardness of polymatroid basis problems $(\hat{P}_{\le k})$ and $(\hat{P}_{= k})$}

Let us consider the decision problem associated to problem $(\hat{P}_{\le k})$ 
which can be formulated as follows:
given an instance $(f_1,f_2, c_1, c_2, k)$ of $(\hat{P}_{\le k})$ together with some target value $T\in \mathbb{R}$, decide whether or not
there exists a base pair $(x,y)\in \mathcal{B}(f_1)\times \mathcal{B}(f_2)$ with $|x\wedge y|\le k$ of cost $c_1^Tx+c_2^Ty \le T.$
Clearly, this decision problem belongs to the complexity class NP, since we can verify in polynomial time whether a given pair $(x,y)$ of vectors satisfies the following three conditions (i) $|x\wedge y| \le k$, (ii) $c_1^Tx+c_2^Ty\le T$, and (iii) $(x,y)\in \mathcal{B}(f_1)\times \mathcal{B}(f_2)$.
To verify (iii), we assume, as usual,
the existence of an evaluation oracle. 

\paragraph{Reduction from \textsc{partition}.}
To show that the problem is NP-complete, we show that any instance of the NP-complete problem \textsc{partition}
can be polynomially reduced to an instance of $(\hat{P}_{\le k})$-decision.
Recall the problem \textsc{partition}: given a set $E$ of $n$ real numbers $a_1, \ldots, a_n$, the task is to decide whether or not the $n$ numbers
can be  partitioned  into two sets $L$ and $R$ with $E=L\cup R$
and $L\cap R=\emptyset$ such that $\sum_{j\in L}a_j=\sum_{j\in R}a_j$.

Given an instance $a_1, \ldots, a_n$ of partition with $B:=\sum_{j\in E} a_j$, we construct the following polymatroid rank function
$$f(U)=\min \left\{\sum_{j\in U} a_j, \frac{B}{2}\right\} \quad \forall U\subseteq E.$$
It is not hard to see that $f$ is indeed a polymatroid rank function as it is normalized, monotone, and submodular.
Moreover, we observe that an instance of \textsc{partition} $a_1, \ldots, a_n$ is a yes-instance if and only if
there exist two bases $x$ and $y$ in polymatroid base polytope $\mathcal{B}(f)$ satisfying $|x\wedge y| \le 0.$

Similarly, it can be shown that any instance of  \textsc{partition} can be reduced to  an instance of the decision problem associated to $(\hat{P}_{= k})$,
since an instance of \textsc{partition} is a yes-instance if and only if for the polymatroid rank function $f$ as constructed above
there exists two bases $x$ and $y$ in polymatroid base polytope $\mathcal{B}(f)$ satisfying $|x\wedge y| = 0.$

\section{More than two matroids}\label{sec:multistage}

Another straightforward generalization of the matroid problems $(P_{\le k})$, $(P_{\ge k})$, and $(P_{= k})$ is to
consider more than two matroids, and a constraint on the intersection of the
bases of those  matroids. Given matroids $\matroid_{1} = (E, \basis_{1}),
\dots, \matroid_{M} = (E, \basis_{M})$ on a common ground set $E$, some integer
$k \in \nn$, and cost functions $c_{1}, \dots, c_{M} \colon E \ra \rr$, 
    we consider the optimization problem $(P_{\leq k}^{M})$
 \begin{align*}
     \min\ & \sum_{i=1}^{M}c_{i}(X_{i})\\
    \text{s.t. } 
    & X_{i} \in \mathcal{B}_{i} \quad \forall i =1,\dots,M\\
    & \Big|\bigcap_{i=1}^{M} X_{i}\Big| \leq k.
\end{align*}
Analogously, we define the problems $(P_{\geq k}^{M})$ and $(P_{= k}^{M})$ 
by replacing $\leq k$ by $\geq k$ and $=k$ respectively.

It is easy to observe that both variants $(P_{\geq k}^{M})$ and $(P_{= k}^{M})$
are NP-hard already for the case $M=3$, since even for the feasibility question
there is an easy reduction from the matroid intersection problem for three
matroids.

Interestingly, this is different for $(P^{M}_{\leq k})$. A direct generalization
of the reduction for $(P_{\leq k})$ to weighted matroid intersection (for two matroids)
shown in Section~\ref{sec:reduction} works again.

\begin{theorem}
    Problem $(P^{M}_{\leq k})$ can be reduced to weighted matroid intersection.
\end{theorem}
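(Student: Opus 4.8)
The plan is to generalize the reduction of $(P_{\le k})$ from Section~\ref{sec:reduction}, replacing the "at most $k$ pairs" matroid by an "at most $k$ complete groups" matroid. First I would set up the ground set $\tilde E := E_1 \udot \cdots \udot E_M$ consisting of $M$ disjoint copies of $E$, and for each $e\in E$ let $G_e\subseteq\tilde E$ be the set of all $M$ copies of $e$, so that the groups $\{G_e\}_{e\in E}$ partition $\tilde E$. Let $\mathcal N_1=(\tilde E,\tilde\inds_1)$ be the direct sum of $\matroid_1$ on $E_1,\dots,\matroid_M$ on $E_M$; its bases are exactly the sets $X_1\udot\cdots\udot X_M$ with $X_i\in\basis_i$. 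Let $\mathcal N_2=(\tilde E,\tilde\inds_2)$ with $\tilde\inds_2:=\{A\subseteq\tilde E: A\text{ contains at most }k\text{ of the groups }G_e\text{ completely}\}$.

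Writing $X_i:=A\cap E_i$, a set $A$ that is a basis of $\mathcal N_1$ and lies in $\tilde\inds_2$ is precisely a feasible solution $(X_1,\dots,X_M)$ of $(P^M_{\le k})$, because "$G_e\subseteq A$" means exactly "$e\in\bigcap_i X_i$". With the weight $w(e):=C-c_i(e)$ for $e\in E_i$ and $C>0$ chosen large enough that cardinality dominates, a maximum-weight common independent set of $\mathcal N_1$ and $\mathcal N_2$ is forced to have cardinality $\sum_i\rank(\matroid_i)$ and hence to be a basis of $\mathcal N_1$ whenever a feasible solution exists, and among such sets maximizing $w$ is equivalent to minimizing $\sum_i c_i(X_i)$; if the maximum-weight common independent set is not a basis of $\mathcal N_1$, the instance is infeasible. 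This is exactly the bookkeeping already used for $M=2$.

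The crux is to show that $\mathcal N_2$ is a matroid. I see two routes. (i) Verify the exchange axiom directly, generalizing the two-matroid argument: if $A$ cannot be augmented from $B$ and $|A|\le|B|$, then $A$ contains exactly $k$ complete groups and every $g\in B\setminus A$ completes its group $G(g)$ inside $A$; since then $|G(g)\cap A|=M-1$ one gets $|G(g)\cap(B\setminus A)|\le 1$, and a short counting/injection argument yields $|B|\le|A|$, a contradiction. (ii) More cleanly, exhibit the rank function
\[
\rho(A)=|A|-\max\{0,\ g(A)-k\},\qquad g(A):=\bigl|\{e\in E: G_e\subseteq A\}\bigr|,
\]
and note that $g$ is monotone and supermodular (a sum of indicator functions of principal filters in $2^{\tilde E}$), so that $\max\{0,g(A)-k\}$ is supermodular, being the composition of the convex nondecreasing map $t\mapsto\max\{0,t-k\}$ with a monotone supermodular function. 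Hence $\rho$ is submodular; it is plainly normalized and monotone, and satisfies the unit-increase property because $g$ increases by at most one per added element. Therefore $\rho$ is a matroid rank function, and its independent sets are exactly $\{A: g(A)\le k\}=\tilde\inds_2$.

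Finally I would assemble the pieces: the instance $(\mathcal N_1,\mathcal N_2,w)$ of weighted matroid intersection solves $(P^M_{\le k})$, and since $\mathcal N_1$ and $\mathcal N_2$ have polynomial-time independence oracles (the one for $\mathcal N_2$ just counts completed groups), the reduction is polynomial; combined with Frank's strongly polynomial weighted matroid intersection algorithm this also gives a strongly polynomial algorithm for $(P^M_{\le k})$. The main obstacle is the matroid property of $\mathcal N_2$, i.e.\ the submodularity of $\rho$ (equivalently, the exchange axiom); everything else parallels Section~\ref{sec:reduction} with essentially no change.
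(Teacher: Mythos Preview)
Your proposal is correct and follows essentially the same reduction as the paper: the same ground set $\tilde E=E_1\udot\cdots\udot E_M$, the same direct-sum matroid $\mathcal N_1$, the same ``at most $k$ complete groups'' matroid $\mathcal N_2$ (the paper calls the groups \emph{lines}), and the same weight $w(e)=C-c_i(e)$. The only difference is that the paper verifies the matroid property of $\mathcal N_2$ solely via the augmentation axiom (your route (i)), whereas you additionally offer the rank-function argument (ii); the latter is correct (since a convex nondecreasing function composed with a monotone supermodular set function is supermodular) and gives a cleaner structural explanation, but the overall approach is the same.
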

\begin{proof}
    Let $\tilde{E} := E_{1} \udot \dots \udot E_{M}$, where $E_{1}, \dots, E_{M}$ are $M$ copies of our original ground set $E$. We consider
     $\mathcal{N}_{1} = (\tilde{E}, \tilde{\inds}_{1}), \dots, \mathcal{N}_{2} = (\tilde{E}, \tilde{\inds}_{2})$, two special types of matroids on this new ground set $\tilde{E}$, 
where $\inds_{1}, \dots, \inds_{M}, \tilde{\inds}_{1}, \tilde{\inds}_{2}$ are
the sets of independent sets of $\matroid_{1}, \dots, \matroid_{M}, \mathcal{N}_{1}, \mathcal{N}_{2}$ respectively.
Firstly, let
$\mathcal{N}_{1} = (\tilde{E}, \tilde{\inds}_{1})$ be the direct sum of
$\matroid_{1}$ on $E_{1}$ to $\matroid_{M}$ on $E_{M}$. That is, for $A
\subseteq \tilde{E}$ it holds that $A \in \tilde{\inds}_{1}$ if and only if $A
\cap E_{i} \in \inds_{i}$ for all $i=1,\dots,M$.

The second matroid $\mathcal{N}_{2} = (\tilde{E}, \tilde{\inds}_{2})$ is defined 
as follows: we call $e_{1} \in E_{1}, \dots, e_{M} \in E_{M}$ a line, if $e_{1}$
to $e_{M}$ are copies of the same element in $E$. If $e_{i}$ and $e_{i'}$ are
part of the same line then we call $e_{i}$ a sibling of $e_{i'}$ and vice versa. 
Then
\[\tilde{\inds}_{2} := \{ A \subseteq \tilde{E} \colon A \mbox{ contains at most
$k$ lines}\}.\]
For any $A \subseteq \tilde{E}$, $X_{i}=A\cap E_{i}$ for all $i=1,\dots,M$ forms a feasible solution for
$(P^{M}_{\le k})$ if and only if
$A$ is a basis in matroid $\mathcal{N}_1$ and independent in matroid $\mathcal{N}_2.$
Thus, $(P^{M}_{\le k})$ is equivalent to the weighted matroid intersection problem
$$\max\{w(A) \colon A\in \tilde{\inds}_{1} \cap \tilde{\inds}_{2}\},$$
with weight function
\[
w(e)=
\begin{cases}
    C-c_i(e) & \mbox{ if } e\in E_i, i \in \{1,\dots,M\},\\
\end{cases}
\]
for some constant $C>0$ chosen large enough to ensure that $A$ is a basis in $\mathcal{N}_1$.
To see that $\mathcal{N}_2$ is indeed a matroid, we first observe that 
$\tilde{\inds}_{2}$ is non-empty and downward-closed (i.e., $A\in \tilde{\inds}_{2}$, and $B\subset A$ implies $B\in \tilde{\inds}_{2}$).
To see that $\tilde{\inds}_{2}$ satisfies the matroid-characterizing augmentation property
$$A,B\in \tilde{\inds}_{2} \mbox{ with } |A| \le |B| \mbox{ implies } \exists
e\in B\setminus{A} \mbox{ with } A+e \in \tilde{\inds}_{2},$$
take any two independent sets
$A,B\in \tilde{\inds}_{2}$. If $A$ cannot be augmented from $B$, i.e., if
$A+e\not\in \tilde{\inds}_{2}$ for every $e\in B\setminus{A}$, then
$A$ must contain exactly $k$ lines, and for each  $e\in B\setminus{A}$, the $M-1$ siblings of $e$ must be contained in $A$. This implies $|B| \le |A|$, i.e., $\mathcal{N}_2$ is a matroid.
\end{proof}


\paragraph{Acknowledgement.}  We would like to thank Andr\'as Frank, Jannik 
Matuschke, Tom McCormick, Rico Zenklusen, Satoru Iwata, Mohit Singh, 
Michel Goemans, and Guyla Pap for fruitful discussions at the HIM 
workshop in Bonn, and at the workshop on Combinatorial Optimization in 
Oberwolfach. We would also like to thank
Björn Tauer and Thomas Lachmann for several 
helpful discussions about this topic.

Stefan Lendl acknowledges the support of the Austrian Science Fund 
(FWF): W1230.

\bibliographystyle{plain}
\bibliography{robust}

\appendix

\section{Reduction from $(P_{\geq k})$ to independent bipartite matching}
\label{app:independentmatching}

As mentioned in the introduction, one can also solve problem 
$(P_{\ge k})$ (and, hence, also problem $(P_{\le k})$) by reduction to a special case of the basic path-matching
problem~\cite{cunningham1997optimal}, which is also known under the name
\emph{independent bipartite matching problem}~\cite{iri1976algorithm}. The basic path-matching problem can be
solved in strongly polynomial time, since it is a special case of the submodular
flow problem.

\begin{defi}[Independent bipatite matching]
We are given two matroids $\matroid_1=(E_1,\basis_1)$ and
$\matroid_2=(E_2,\basis_2)$ and a weighted, bipartite graph $G$ on node sets
$E_1$ and $E_2$. The task is to find a minimum weight matching in $G$ such that the elements that are matched in $E_1$ form a basis in $\matroid_1$ and the elements matched in $E_2$ form a basis in $\matroid_2$.
\end{defi}

Consider an instance of $(P_{\geq k})$, where we are given two matroids $\matroid_1, \matroid_2$ on common ground set $E$. We create an instance for the independent matching problem as follows:
Define a bipartite graph $G=(E_{1}, E_{2}, A)$, where $E_{1}$
contains a distinct copy of $E$ and an additional set $U_{1}$ of exactly
$k_2:=\rank_{2}(E)-k$ elements. Let $\tilde{\matroid}_{1}$ be the sum of
$\matroid_{1}$, on the copy of $E$ in $E_{1}$ and the unrestricted matroid of
all subsets of $U_{1}$. The set $E_{2}$ and the matroid $\tilde{\matroid}_{2}$
are defined symmetrically and $U = U_{1} \cup U_{2}$. The set $A$ of edges in $G$ contains an edge between
$\{e, e'\}$ if $e, e'$ are copies of the same element of $E$ in $E_{1}, E_{2}$.
In addition we add all edges $\{e, u\}$ if $e$ is a copy of some element of $E$
in $E_{1}$ and $u \in U_{2}$ or if $e$ is a copy of some element of $E$ in
$E_{2}$ and $u \in U_{1}$. See Figure~\ref{fig:independentmatching} for an
illustration of the constructed bipartite graph.

Observe, that every feasible solution to the independent bipartite matching instance
matches a basis of $\tilde{\matroid}_{1}$ with a basis of
$\tilde{\matroid}_{2}$. Respectively for $i=1,2$, these bases consist of a basis in
$\matroid_{i}$ and all elements in $U_{i}$. Hence at most $\rank_{i}(E) - k$ can
be matched with an arbitrary element of $U$, so at least $k$ elements need to be
matched using edges $\{e, e'\}$. This implies that the size of the intersection of the
corresponding bases in $\matroid_{1}, \matroid_{2}$ is at least $k$.

Let us shortly comment on the main challenge when trying to use this or a similar construction to solve the problem $(P_{=k})$ with equality constraint on the size of the intersection. It is, off course, possible to add additional constraints to enforce that \emph{exactly} $k$ edges matching two copies of the same element appear in any feasible solution to the path-matching problem. However, how can you ensure that the solution does not use two edges $\{e,u\}$ and $\{e, u'\}$ where $e$  corresponds to a copy of the same element of $E$ in both, $E_1$ and $E_2$, and $u\in U_2$ and $u'\in U_1$?

\begin{figure}[ht]
\begin{center}
\begin{tikzpicture}[shorten >=1pt,->]
  \tikzstyle{vertex}=[circle,fill=black!25,minimum size=17pt,inner sep=0pt]

    \node[vertex] (U1-1) at (1,0) {$u_1$};
    \node[vertex] (U2-1) at (7,-3) {$u'_{1}$};
     \node[vertex] (U1-2) at (2,0) {$u_{2}$};
    \node[vertex] (U2-2) at (8,-3) {$u'_{2}$};
     \node[vertex] (U1-3) at (3,0) {$\dots$};
    \node[vertex] (U2-3) at (9,-3) {$\dots$};
     \node[vertex] (U1-4) at (4,0) {$u_{k_2}$};
    \node[vertex] (U2-4) at (10,-3) {$u_{k_1}$};

  \foreach \x in {1,2,3,4}{
    \node[vertex] (M1-\x) at (\x+4,0) {$e_{\x}$};
    \node[vertex] (M2-\x) at (\x,-3) {$e'_{\x}$};
    \draw[thick] (M1-\x) -- (M2-\x);
    }
    
     \node[vertex] (M1-5) at (9,0) {$\dots$};
    \node[vertex] (M2-5) at (5,-3) {$\dots$};
    \draw[thick] (M1-5) -- (M2-5);
    
     \node[vertex] (M1-6) at (10,0) {$e_m$};
    \node[vertex] (M2-6) at (6,-3) {$e'_m$};
    \draw[thick] (M1-6) -- (M2-6);

\foreach \x in {1,2,3,4}{
    \foreach \y in {1,2,3,4,5,6}{
    \draw[black!20] (U1-\x) -- (M2-\y); 
    \draw[black!20] (M1-\y) -- (U2-\x); 
}}
\end{tikzpicture}
\end{center}
\caption{Bipartite graph used in the reduction from $P_{\geq k}$ to basic path-matching}
\label{fig:independentmatching}
\end{figure}
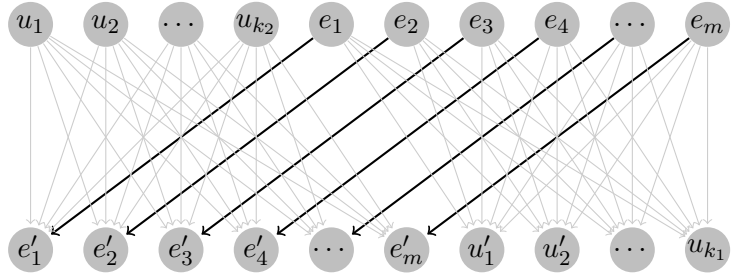

\section{Proof of Theorem~\ref{thm:suffpairopt}}
\label{app:suffpairopt}
Consider the following linear relaxation $(P_{\lambda})$ of an integer programming formulation of problem val$(\lambda)$.
The letters in squared brackets indicate the associated dual variables.
\begin{align*}
    \val(\lambda) = \min\ & \sum_{e \in E} c_{1}(e) x_{e} + \sum_{e \in E}
    c_{2}(e) y_{e} - \lambda \sum_{e \in E} z_{e}  & & \\
    \text{s.t. } & \sum_{e \in E} x_{e} = \rank_{1}(E) & & \quad [\mu]\\
    & \sum_{e \in U} x_{e} \leq \rank_{1}(U) & \forall U \subset E & \quad [w_{U}] \\
    & \sum_{e \in E} y_{e} = \rank_{2}(E) & & \quad  [\nu] \\
    & \sum_{e \in U} y_{e} \leq \rank_{2}(U) & \forall U \subset E & \quad [v_{U}] \\
    & x_{e} - z_{e} \geq 0 & \forall e \in E & \quad [\alpha_{e}] \\
    & y_{e} - z_{e} \geq 0 & \forall e \in E & \quad [\beta_{e}] \\
    & x_{e}, y_{e}, z_{e} \geq 0 &\forall e\in E. &  
\end{align*}

The dual program is then $(D_{\lambda})$:
\begin{align*}
     \max\ & \sum_{U \subset E} \rank_{1}(U) w_{U} +
    \rank_{1}(E) \mu + \sum_{U \subset E} \rank_{2}(U) v_{U} + \rank_{2}(E)
    \nu & \\
    \text{s.t. } & \sum_{U \subset E \colon e \in U} w_{U} + \mu \leq
    c_{1}(e) - \alpha_{e} & \forall e \in E \\
    & \sum_{U \subset E \colon e \in U} v_{U} + \nu \leq
    c_{2}(e) - \beta_{e} & \forall e \in E \\
    & \alpha_{e} + \beta_{e} \geq \lambda & \forall e \in E & \\
    & w_{U}, v_{U} \leq 0. & \forall U \subset E & \\
    & \alpha_{e}, \beta_{e} \geq 0. & \forall e \in E & \\
\end{align*}

Applying the strong LP-duality  to  the two inner problems which correspond to dual variables $(w,\mu)$ and $(v,\nu)$, respectively,  yields

\begin{multline}\max_{\alpha\ge 0} \left\{\sum_{U \subset E} \rank_{1}(U) w_{U} +
    \rank_{1}(E) \mu \mid \sum_{U \subset E \colon e \in U} w_{U} + \mu \leq
    c_{1}(e) - \alpha_{e} \ \forall e\in E,  w_{U} \leq 0 \forall U \subset E \right\}  
    \\ =
    \min_{X\in \basis_1} c_1(X)-\alpha(X),
\end{multline}
    
\begin{multline} \max_{\beta\ge 0} \left\{\sum_{U \subset E} \rank_{2}(U) v_{U} +
    \rank_{2}(E) \nu \mid \sum_{U \subset E \colon e \in U} v_{U} + \nu \leq
    c_{2}(e) - \beta_{e} \ \forall e\in E,  v_{U} \leq 0 \forall U \subset E \right\} 
    \\= 
    \min_{Y\in \basis_2} c_2(Y)-\beta(Y).
\end{multline}

Thus, replacing the two inner problems by their respective duals, we can rewrite  $(D_{\lambda})$ as follows:
\begin{align*}
     \max\ & \left(\min_{X\in \basis_1} \left(c_1(X)-\alpha(X)\right) +
     \min_{Y\in \basis_2} \left(c_2(Y)-\beta(Y)\right)\right)& \\
    \text{s.t. } 
    & \alpha_{e} + \beta_{e} \geq \lambda & \forall e \in E & \\
    & \alpha_{e}, \beta_{e} \geq 0. & \forall e \in E & 
\end{align*}

Now, take any tuple $(X,Y, \alpha, \beta)$ satisfying the optimality conditions (i),(ii) and (iii) for $\lambda$. Observe that the incidence vectors $x$ and $y$ of $X$ and $Y$, respectively, together with the incidence vector $z$ of the intersection $X\cap Y$, is a feasible solution of the primal LP, while
$\alpha$ and $\beta$ yield a feasible solution of the dual LP.
Since 
$$   c_1(X)-\alpha(X)+c_2(Y) -\beta(Y)=c_1(X)+c_2(Y)-\sum_{e\in X\cap Y} (\alpha_e+\beta_e)=c_1(X)+c_2(Y)-\lambda |X\cap Y|,
$$
the objective values of the primal and dual feasible solutions coincide.
It follows that any tuple $(X,Y,\alpha, \beta, \lambda)$ satisfying optimality conditions (i),(ii) and (iii) must be optimal for val$(\lambda)$ and its dual. \qed

\end{document}